\numberwithin{equation}{section}
\newtheorem{Thm}[equation]{Theorem}
\newtheorem{Lem}[equation]{Lemma}
\newtheorem{Pro}[equation]{Proposition}
\newtheorem{Cor}[equation]{Corollary}
\theoremstyle{definition}
\newtheorem{Rem}[equation]{Remark}
\newcommand{\R}{{\mathbb R}}
\newcommand{\Z}{{\mathbb Z}}
\newcommand{\bI}{{\mathbf I}}
\newcommand{\M}{{\mathbf M}}
\newcommand{\cB}{{\mathscr B}}
\newcommand{\cD}{{\mathscr D}}
\newcommand{\cP}{{\mathscr P}}
\newcommand{\cZ}{{\mathscr Z}}
\newcommand{\del}{\delta}
\newcommand{\Del}{\Delta}
\newcommand{\eps}{\epsilon}
\newcommand{\gam}{\gamma}
\newcommand{\Gam}{\Gamma}
\newcommand{\lam}{\lambda}
\newcommand{\Lam}{\Lambda}
\newcommand{\om}{\omega}
\newcommand{\Om}{\Omega}
\renewcommand{\rho}{\varrho}
\newcommand{\sig}{\sigma}
\newcommand{\Sig}{\Sigma}
\newcommand{\area}{\operatorname{area}}
\newcommand{\asdim}{\operatorname{as\,dim}}
\newcommand{\asNdim}{asymptotic Nagata dimension}
\newcommand{\bb}[1]{\llbracket #1\rrbracket} 
\newcommand{\on}{\mathop{\mbox{\rule{0.1ex}{1.2ex}\rule{1.1ex}{0.1ex}}}}
\newcommand{\const}{\operatorname{const}}
\renewcommand{\d}{\partial}
\newcommand{\diam}{\operatorname{diam}}
\newcommand{\id}{\operatorname{id}}
\newcommand{\cs}{\text{\rm c}} 
\newcommand{\spt}{\operatorname{spt}}
\newcommand{\CAT}{\operatorname{CAT}}
\newcommand{\Fill}{\operatorname{Fill}}
\newcommand{\mesh}{\operatorname{mesh}}
\newcommand{\sub}{\subset}
\newcommand{\sm}{\setminus}
\newcommand{\ol}{\overline}
\newcommand{\es}{\emptyset}
\newcommand{\Lip}{\operatorname{Lip}}
\newcommand{\vol}{\operatorname{vol}}
\begin{document}

\title[Isoperimetric inequalities in Hadamard spaces]{Isoperimetric inequalities in Hadamard spaces of asymptotic rank two}
\author{Urs Lang}
\address{Department of Mathematics\\ETH Z\"urich\\R\"amistrasse 101
  \\8092 Z\"urich\\Switzerland}
\email{lang@math.ethz.ch}
\author{Stephan Stadler}
\address{Max Planck Institute for Mathematics\\Vivatsgasse 7\\53111 Bonn\\Germany}
\email{stadler@mpim-bonn.mpg.de}
\author{David Urech}
\address{Department of Mathematics\\ETH Z\"urich\\R\"amistrasse 101
  \\8092 Z\"urich\\Switzerland}
\email{david.urech@math.ethz.ch}

\date{July 2, 2026}

\begin{abstract}
Gromov's isoperimetric gap conjecture for Hadamard spaces states that cycles in dimensions greater than or equal to the asymptotic rank admit linear isoperimetric filling inequalities, as opposed to the inequalities of Euclidean type in lower dimensions.
In the case of asymptotic rank $2$, recent progress was made by Dru\c{t}u--Lang--Papasoglu--Stadler who established a homotopical inequality for Lipschitz $2$-spheres with exponents arbitrarily close to $1$. 
We prove a homological inequality of the same type for general cycles in dimensions at least $2$, assuming that the ambient space has finite linearly controlled asymptotic dimension. 
This holds in particular for all Hadamard $3$-manifolds and finite-dimensional $\CAT(0)$ cube complexes. 
\end{abstract}

\maketitle


\section{Introduction}

The classical isoperimetric inequality in $\R^{k+1}$ bounds the volume of a domain in terms of the $k$-dimensional area of its boundary or, specifically, by the volume of a ball with equal perimeter. 
For $k$-dimensional boundaries in higher codimension, isoperimetric filling inequalities assert the existence of a $(k+1)$-dimensional spanning surface satisfying a similar relation. Inequalities of this type arise in a much wider context and are of fundamental interest in metric geometry and geometric group theory, where their growth rate serves as an
asymptotic invariant of the ambient space.

In this paper, we focus on spaces of non-positive curvature, Hadamard manifolds, or their metric companions, $\CAT(0)$ spaces, complete geodesic metric spaces satisfying Euclidean triangle comparisons (see~\cite{Bal, BriH}). A basic result due to Gromov~\cite[3.4.C, Remarks~(a) and~(c)]{Gro-FRR} shows that an {\em isoperimetric inequality of Euclidean type}
holds for integral $k$-cycles in any Hadamard $n$-manifold, independently of $n$: every $k$-cycle $T$ admits a filling by a $(k+1)$-chain $V$ with boundary $\d V = T$ and volume
\[
\vol_{k+1}(V) \le \const \cdot \vol_k(T)^{1+1/k}
\]
for some constant depending only on~$k$. 
(This was later shown to hold with the optimal constant $\vol_{k+1}(B^{k+1})/\vol_k(\partial B^{k+1})^{1+1/k}$, where $B^{k+1}$ is the Euclidean unit ball, in a few cases, notably if $X = \R^n$~\cite{Alm-OI} or if $k=2$~\cite{Sch}.)
Wenger~\cite[Theorem~1.2]{Wen-EII} extended the above (non-sharp) inequality to a wide class of metric spaces including all $\CAT(0)$ spaces.
This result is restated in Theorem~\ref{Thm:eucl-isop} below.

We are interested in improvements of this inequality deriving from more specific geometric properties of the ambient Hadamard space. 
If the curvature is bounded above by a negative constant, then a geodesic cone construction yields a {\em linear isoperimetric inequality}
\[
\vol_{k+1}(V) \le \const \cdot \vol_k(T);
\]
see~\cite[Theorem~1.7]{Wen-FI}.
In general, one expects a phase transition similar to what occurs in Riemannian symmetric spaces $X$ of rank $\nu$: in dimensions $k < \nu$, cycles in~$X$ satisfy a Euclidean-type inequality, while for $k \ge \nu$, a linear inequality holds (compare~\cite[p.~105, (b$'_1$)]{Gro-AI}, \cite[Theorem~1]{Leu}, and \cite[Theorem~1, Remarks~9 and~10]{Isl}). 
In~\cite[p.~128, (b)]{Gro-AI}, Gromov stated this conjectural isoperimetric gap for $\CAT(0)$ spaces $X$ in the form of an equality between the critical dimension $\nu$ and various notions of rank.
Subsequent work by Kleiner~\cite[Theorems~C and~7.1]{Kle} and an improved Euclidean (`sub-Euclidean') inequality due to Wenger~\cite[Theorem~1.2, Corollary~1.5]{Wen-AR} support the conjecture for a unified notion of {\em asymptotic rank}.
This quasi-isometry invariant equals the supremum of the dimensions of Euclidean subspaces isometrically embedded in any asymptotic cones of~$X$, or in $X$ itself in case $X$ is proper with cocompact isometry group. (See also~\cite{KleL, GolL} for further discussion).

The asymptotic rank $\nu$ is at most $1$ if and only if $X$ is Gromov hyperbolic~\cite[Corollary~1.3]{Wen-AR}. 
For $1$-cycles, every $\CAT(0)$ space $X$ satisfies a Euclidean (quadratic) isoperimetric inequality with the sharp constant $1/(4\pi)$, and an improved inequality with a strictly smaller constant implies that $X$ is Gromov hyperbolic and satisfies a linear isoperimetric inequality (see~\cite[p.~106]{Gro-HG} and~\cite{Wen-IC}). 
For cycles of dimension $k > \nu = 1$, \cite[Theorems~1 and~3]{Lan-LII} and \cite[p.~306]{LanP} provide linear fillings under some additional assumptions, while if $X$ is a general Gromov hyperbolic $\CAT(0)$ space, \cite[Theorem~1.6]{Wen-AR} yields for every $\del > 0$ an inequality of the form 
\[
\vol_{k+1}(V) \le \const_\del \cdot \vol_k(T)^{1+\del},
\]
in the following referred to as a {\em $\del$-isoperimetric inequality}.

For the case of higher asymptotic rank $\nu \ge 2$, recent progress on the isoperimetric gap conjecture was made in~\cite{DLPS}, where a homotopical $\delta$-isoperimetric inequality was established for proper $\CAT(0)$ spaces $X$ with asymptotic rank~$2$: 
every Lipschitz $2$-sphere $S$ in $X$ bounds a Lipschitz $3$-ball $B$ with
\[
\vol(B)\leq C\cdot\area(S)^{1+\delta},
\]
where the constant $C$ depends only on $X$ and $\delta > 0$.
More generally, it was shown that the $\del$-isoperimetric inequality remains valid for fillings of Lipschitz surfaces of higher genus by Lipschitz handlebodies, with a constant depending in addition on the genus. 

Here we now address the case of general $k$-cycles for $k \ge 2$ in a (not necessarily proper) $\CAT(0)$ space $X$ of asymptotic rank~$2$.  
We use the chain complex $\bI_{*,\cs}(X)$ of metric integral currents with compact support, which comprises all Lipschitz singular chains and has good compactness properties (see Section~\ref{sect:currents}).
To cope with the missing topological control of the cycles under consideration, we assume that $X$ has finite {\em asymptotic Nagata dimension}, a variant of Gromov's asymptotic dimension~\cite[1.E]{Gro-AI}, also known as {\em linearly controlled asymptotic dimension} (see Section~\ref{sect:nagata}). This will allow us to 
decompose cycles suitably.

\begin{Thm} \label{Thm:main}
Let $X$ be a $\CAT(0)$ space of asymptotic rank at most~$2$ and of finite asymptotic Nagata dimension. 
Then for every cycle $T \in \bI_{k,\cs}(X)$ in~$X$ of dimension
$k \ge 2$ and every $\del > 0$ there exists a $V \in \bI_{k+1,\cs}(X)$ with boundary $\d V = T$ and mass
\[
  \M(V) \le C\cdot\M(T)^{1 + \del}
\]
for some constant $C$ depending only on $X$, $k$, and $\del$.
\end{Thm}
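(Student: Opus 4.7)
The proof divides naturally into the base case $k=2$, which extends the DLPS sphere-filling result to general integral $2$-cycles, and an induction on dimension propagating the inequality to all $k \ge 2$ by slicing.

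\textbf{Base case $k=2$.} Given $T \in \bI_{2,\cs}(X)$ of mass $r$, I would first use finite \asNdim to construct, at a scale $\rho > 0$ to be chosen later, an open covering of $\spt T$ by sets of diameter $\lesssim \rho$ with multiplicity bounded by a constant $D$ independent of $\rho$. The nerve $P$ is a simplicial complex of dimension at most $D$, embedded into $X$ via geodesic interpolation at centers. A Federer--Fleming type deformation for metric integral currents, in the spirit of \cite{Wen-EII}, then pushes $T$ to a polyhedral $2$-cycle $T' \in P$ with $\M(T') \le C_0 \M(T)$ and a transition $3$-chain $R$ satisfying $\d R = T - T'$ and $\M(R) \le C_0 \rho \M(T)$. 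Decomposing $T'$ into a finite sum of closed Lipschitz surfaces $S_i$ in $X$ whose total area is comparable to $\M(T)$, I would then invoke the DLPS inequality (in its genus-dependent version, adapted to the non-proper setting) to fill each $S_i$ by a Lipschitz $3$-handlebody $V_i$ with $\M(V_i) \le C_1(g_i) \area(S_i)^{1+\delta/2}$. Provided the total cost $\sum_i C_1(g_i) \area(S_i)^{1+\delta/2}$ can be bounded by $C_2 \M(T)^{1+\delta/2}$ using the combinatorics of $P$, setting $V := R + \sum_i V_i$ and choosing $\rho \sim \M(T)^{\delta/2}$ balances the two error terms to give $\M(V) \le C \M(T)^{1+\delta}$.

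\textbf{Inductive step $k \ge 3$.} Assuming the inequality in dimension $k$, consider a $(k+1)$-cycle $T$ of mass $r$. Slicing $T$ by a $1$-Lipschitz function $f$ (e.g.\ $f = d(\cdot, p)$ for a base point $p$) produces $k$-cycles $T_t := \langle T, f, t \rangle$ with coarea bound $\int \M(T_t)\,dt \le \M(T)$. Applying the inductive hypothesis yields $(k+1)$-chain fillings $W_t$ of $T_t$ with $\M(W_t) \le C \M(T_t)^{1+\delta'}$ for some $\delta' < \delta$. The family $\{W_t\}$ assembles into a $(k+2)$-chain $V$ bounding $T$ via a standard telescoping construction based on the identity $\d(T \on \{f < t\}) = T_t$ (a.e.\ $t$). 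A Jensen-type convexity estimate, possibly combined with an initial Nagata-driven reduction on the diameter of $\spt T$, converts $\int \M(T_t)^{1+\delta'}\,dt$ into the required bound $\M(V) \le C' \M(T)^{1+\delta}$.

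\textbf{Main obstacle.} The principal difficulty is the base case. Two points stand out: (i) controlling the total cost of the surface decomposition, which amounts to a delicate analysis of the polyhedral nerve $P$ and the genus dependence of the DLPS constant; and (ii) adapting the DLPS proof itself to the non-proper setting, as its original version relies on asymptotic cone and ultralimit arguments that use local compactness. The finite \asNdim provides scale-invariant covering control that substitutes for compactness, but threading this substitution through the full DLPS argument is the main new technical work. The inductive step, by contrast, is a variant of the classical slice-and-cone technique of Gromov and Wenger and should be more routine.
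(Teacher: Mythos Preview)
Your base case has a real gap. You propose to deform $T$ to a polyhedral $2$-cycle in a nerve complex $P$, split it into closed surfaces $S_i$, and fill each via the DLPS inequality. But the DLPS constant depends on the genus, and the surfaces produced by a nerve have no a priori genus bound: the number of $2$-cells is of order $\M(T)/\rho^2$, and a single connected component of the polyhedral cycle can absorb all of them, so $\sum_i C_1(g_i)\,\area(S_i)^{1+\delta/2}$ is uncontrolled. You also need DLPS in non-proper spaces, and ``finite \asNdim\ substitutes for compactness'' is an aspiration, not an argument --- the DLPS proof uses ultralimit and minimal-disc techniques that genuinely rely on properness. Your inductive step is likewise not a proof as written: fillings $W_t$ of the slices $T_t$ do not assemble into a $(k{+}2)$-chain bounding $T$ by any ``telescoping'' identity (a correct propagation result is \cite[Theorem~1.6]{Wen-AR}, but it proceeds by a different thick--thin decomposition, not by slicing).

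The paper avoids DLPS entirely and treats all $k\ge 2$ uniformly, with no induction on $k$. Its central device is the \emph{minimizing $k$-simplex} (Section~\ref{sect:min-simplices}): iterated mass-minimizing fillings over a geodesic $1$-skeleton. The higher-rank hyperbolicity machinery of \cite{KleL,GolL} yields a \emph{linear} isoperimetric inequality for such simplices once $k>\nu$ (Theorem~\ref{Thm:slim-simplices}), whence piecewise minimizing cycles admit cheap cone-type fillings (Proposition~\ref{Pro:p-filling}). Finite \asNdim\ enters only through an approximation theorem (Theorem~\ref{Thm:approx}): at each scale $s$, a cycle $T$ splits as a piecewise minimizing $P$ with $\mesh(P)\lesssim s$ and $|P|_1\lesssim s^{-k}\M(T)$, plus finitely many cycles $Z_i$ of diameter $\lesssim s$ and total mass $\lesssim\M(T)$. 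The main proof first reduces to ``round'' cycles via Wenger's decomposition (Proposition~\ref{Pro:decomp}), then iterates the approximation across a finite decreasing sequence of scales with ratio $\le\M(R)^\delta$, filling each $P$ by Proposition~\ref{Pro:p-filling} and coning the terminal $Z_i$. No surfaces, no genus bookkeeping.
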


Note that both conditions imposed on the underlying $\CAT(0)$
space remain invariant under quasi-isometries.
With the slightly stronger assumption of finite {\em Nagata dimension}, the result holds analogously for fillings of Lipschitz cycles by Lipschitz chains; see Remark~\ref{Rem:chains}. 

It would suffice to prove Theorem~\ref{Thm:main} in the case $k = 2$, by the aforementioned~\cite[Theorem~1.6]{Wen-AR}, which shows that the $\del$-isoperimetric inequality is passed on to higher-dimensional cycles.
However, we will not rely on the latter result, as our argument applies equally well to all $k \ge 2$.

We now list some instances covered by Theorem~\ref{Thm:main} (see 
Items (I)--(IV) in Section~\ref{sect:nagata} for more details).
\begin{enumerate}
\item[(A)] 
Three-dimensional Hadamard manifolds have Nagata dimension~$3$ by~\cite{JorL}.
In this case, for $k = 2$, the filling $V$ is uniquely determined by $T$
(as a distinct $V'$ would give rise to a non-zero $4$-current with boundary $V - V'$), 
and Theorem~\ref{Thm:main} amounts to an isoperimetric inequality for bounded domains with finite perimeter in $X$.
It is not known whether all Hadamard manifolds have finite (asymptotic) Nagata dimension. 
\item[(B)]
Finite-dimensional $\CAT(0)$ cube complexes have finite asymptotic Nagata dimension by the argument for the asymptotic dimension given in~\cite{Arz+}. 
In this case, or whenever $X$ is suitably triangulated, Theorem~\ref{Thm:main} implies an analogous statement for cellular chains via the deformation theorem of geometric measure theory; 
see~\cite[Theorem~A.2]{BasWY} (compare also the first part of the proof of 
Theorem~\ref{Thm:approx} below). 
\item[(C)] 
A Gromov hyperbolic $\CAT(0)$ space $X$ has finite asymptotic Nagata dimension provided that for some scale $r > 0$, every $2r$-ball in $X$ can be covered by a fixed finite number of $r$-balls~\cite{LanS-ND}. 
In particular, the product $X \times X'$ of two such spaces has asymptotic rank at most~$2$ and finite asymptotic Nagata dimension.   
\end{enumerate}

In view of~(A), for Hadamard $3$-manifolds, the picture is now nearly complete.
By a result of Kleiner~\cite[Theorem~2]{Kle-I} (generalized in~\cite{Sch}), every such manifold $X$ satisfies a Euclidean isoperimetric inequality for domains with the sharp constant $1/(6\sqrt{\pi})$. 
Furthermore, by~\cite[Proposition~7.1]{DLPS}, an inequality with a strictly smaller constant for domains of large volume implies that $X$ has asymptotic rank at most $2$; 
then a $\delta$-isoperimetric inequality holds for all $\del > 0$. 
In addition, if $X$ covers a compact manifold, then a linear inequality holds. 
In fact, if $M$ is a compact non-positively curved $n$-manifold, then it is known that either $M$ is flat
or the fundamental group of $M$ is non-amenable and hence satisfies a combinatorial linear isoperimetric inequality (see~\cite[6.8, 6.14, 6.26]{Gro-MS} and~\cite[Corollary C]{AdaB}); consequently, the universal cover of $M$ admits a linear isoperimetric inequality for domains by~\cite[Theorem~6.19]{Gro-MS}. 

We now comment on the proof strategy and the further structure of the paper. The overall approach is similar as in~\cite{DLPS} in that it uses a decomposition process and a linear isoperimetric inequality for cycles of a specific type. However, the concrete arguments deviate substantially, and the present paper may be read independently of~\cite{DLPS}.

In Section~\ref{sect:min-simplices}, we will discuss {\em minimizing $k$-simplices} spanned by the geodesic $1$-skeleton determined by $k+1$ points in $X$. 
These are defined iteratively by filling in area minimizing faces (integral currents) in dimensions $2,\ldots,k$.
If the asymptotic rank $\nu$ of $X$ is at most $2$, then it follows from results in~\cite{KleL, GolL, Gol-Diss} that minimizing $k$-simplices with $k > \nu$ are `slim' and satisfy a linear isoperimetric inequality (see Theorem~\ref{Thm:slim-simplices} for details). 
This hinges on a uniform upper density bound for minimizing $2$-simplices (Lemma~\ref{Lem:min-triangles}). 
The validity of an analogous bound for higher-dimensional simplices is unknown, and this is the only reason for the restriction to asymptotic rank $2$ in Theorem~\ref{Thm:main}.
As a consequence of the linear isoperimetric inequality, {\em piecewise minimizing} $k$-cycles with $k \ge \nu$, composed of finitely many minimizing simplices, admit efficient fillings of conical type (see Proposition~\ref{Pro:p-filling}).

In Section~\ref{sect:approx}, we prove that a general $k$-cycle $T$ can be approximated by a piecewise minimizing cycle $P$ with vertices in $T$, in dependence of a scale $s > 0$, such that the difference $T - P$ admits a decomposition into finitely many `small' cycles $Z_i$, and both the mass of $P$ and the total mass of the $Z_i$ are bounded by a constant times the mass of $T$ (see Theorem~\ref{Thm:approx}).
This result is independent of the asymptotic rank, but requires suitable controlled coverings of $T$, whose existence is guaranteed, for large scales $s$, if the asymptotic Nagata dimension of $X$ is finite.

Section~\ref{sect:main} concludes the proof of Theorem~\ref{Thm:main}. 
The same reduction as in the proof of Wenger's Euclidean isoperimetric inequality shows that it suffices to consider `round' $k$-cycles $R$, whose diameter is bounded by a certain constant times $\M(R)^{1/k}$. 
The approximation result from Section~\ref{sect:approx} is applied iteratively, for a finite decreasing sequence of scales.
In each step, the respective piecewise minimizing cycles are filled by the result from Section~\ref{sect:min-simplices},
and the remaining cycles are further decomposed using the next smaller scale, except in the last step, when they are small enough to admit efficient cone fillings.

It remains an open question whether a linear isoperimetric filling inequality holds under the assumptions of Theorem~\ref{Thm:main}, or whether the condition on the asymptotic Nagata dimension can be discarded altogether. 


\section{Asymptotic Nagata dimension} \label{sect:nagata}

Let $X$ be a metric space. 
A family $\cB = (B_i)_{i \in I}$ of subsets of $X$ is called {\em $D$-bounded} if every $B_i$ has diameter at most $D$, and $\cB$ has {\em $s$-multiplicity at most $m$} if every set of diameter $\le s$ in $X$ meets no more than $m$ members of the family. 
The {\em asymptotic dimension} of $X$ is the infimum of all integers $n$ such that for every $s > 0$ there exists a $D(s)$-bounded covering of $X$ with $s$-multiplicity at most $n+1$ for some $D(s) < \infty$. 
Note that there is no effect on small scales as it is not required that $D(s)$ tends to zero with~$s$.
This invariant was introduced (and called $\asdim_+$) by Gromov in~\cite[1.E]{Gro-AI}.
He also observed that in many cases the function $D$ can be chosen linear in the large.
Accordingly, $X$ has {\em linearly controlled asymptotic dimension} at most $n$ if there exist constants $r,c > 0$ such that for every $s > r$, $X$ admits a $cs$-bounded covering with $s$-multiplicity at most $n+1$. 
Requiring the latter condition for all scales $s > 0$, one arrives at the notion of {\em Nagata dimension}, introduced earlier by Assouad in~\cite{Ass} and put forward in~\cite{LanS-ND}. 
For this reason, the linearly controlled asymptotic dimension is also known as {\em asymptotic Assouad--Nagata dimension}. 
For brevity, we will use the term {\em \asNdim}.
These definitions have various equivalent reformulations (see, for example,~\cite[Proposition~1.7]{DraS} for the~\asNdim).
In particular, the dimensions are not affected if the covers are taken open or the `test set' of diameter at most~$s$ is replaced with an open or closed ball of radius~$s$.

The literature on these invariants is vast, and we refer to~\cite{BelD, BuyS, Roe} for some surveys in different directions. 
Notice that the \asNdim\ is a quasi-isometry invariant, whereas the asymptotic dimension is a quasi-isometry and coarse invariant, and the Nagata dimension is a bi-Lipschitz, quasi-symmetry, and quasi-M\"obius invariant; see~\cite[Theorem~1.2]{LanS-ND} and~\cite{Xie}.
We list some relevant facts in the context of non-positive curvature.

\begin{enumerate}
\item[(I)] {\em Hadamard manifolds.} 
Every {\em planar} geodesic metric space, admitting an injective continuous map to $\R^2$, has Nagata dimension at most~$2$, and every $3$-dimensional Hadamard manifold has Nagata dimension~$3$ by~\cite[Theorems~2 and~3]{JorL} (based on \cite{FujP}). 
For $n \ge 4$, Hadamard $n$-manifolds of pinched negative curvature are known to have Nagata dimension $n$, and homogeneous Hadamard manifolds have finite Nagata dimension~\cite[Sect.~3]{LanS-ND}. 
It is an open question whether every Hadamard manifold has finite Nagata dimension. 
By~\cite[Corollary~1.8]{LanS-ND}, every $\CAT(0)$ space with finite Nagata dimension is an absolute Lipschitz retract. 
\item[(II)] {\em Products and Euclidean buildings.} 
For each of the three notions, asymptotic, asymptotic Nagata, and Nagata dimension, the dimension of a product $X \times Y$ of metric spaces is at most the sum of the dimensions of the factors~\cite[Theorem~2.5]{Bro+}.
Products of $n$ non-trivial metric \mbox{($\R$-)}trees as well as Euclidean buildings of rank $n$ have Nagata dimension $n$~\cite[Sect.~3]{LanS-ND}.
\item[(III)] {\em Gromov hyperbolic spaces.}
Let $X$ be a geodesic Gromov hyperbolic space. Then $X$ has finite \asNdim\ provided that for a single sufficiently large scale $s > 0$, depending on the hyperbolicity constant, $X$ has a uniformly bounded covering with finite $s$-multiplicity. 
This is satisfied if $X$ is doubling at some scale (unrelated to the hyperbolicity constant); see~\cite[Theorem~3.5]{LanS-ND} and the comment thereafter. 
Furthermore, it follows from~\cite[Theorem~1.1]{Buy} (see also~\cite[Theorem~12.1.1]{BuyS} and~\cite[Theorem~1.5]{Leb}) that if $X$ is visual, then the \asNdim\ of $X$ is less than or equal to the Nagata dimension of the boundary $\d_\infty X$, equipped with any visual metric, plus $1$.
\item[(IV)] {\em $\CAT(0)$ cube complexes.}
Let $X$ be a finite-dimensional (but not necessarily locally finite) $\CAT(0)$ cube complex. 
Wright~\cite[Theorem~4.10]{Wri} showed that the asymptotic dimension of $X$ is at most the dimension of~$X$. 
Subsequently, another proof of the finiteness of the asymptotic dimension was given in~\cite{Arz+}, and the argument actually provides a linear control, thus showing that the \asNdim\ of $X$ is finite; 
see the proof of Corollary~3.3 and the relation $S(x,k,l) \sub B(x,Ml)$ on p.~515 in that paper ($l$\/ is the scale and $Ml$\/ the diameter bound).
\item[(V)]
{\em Asymptotic cones.} 
The Nagata dimension, and hence also the topological dimension, of any asymptotic cone of a metric space $X$ is bounded above by the \asNdim\ of $X$~\cite[Corollary~4.3]{DydH}.
\end{enumerate}

For our main result, the finiteness of the \asNdim\ will be used in two different ways to approximate $k$-cycles by piecewise minimizing ones; 
see the proofs of Proposition~\ref{Pro:homotopy} and Theorem~\ref{Thm:approx}. 
The latter employs the following standard fact (compare~\cite[Proposition~6.1]{BasWY} and~\cite[Lemma~5.2]{MeiW}). We provide the argument for convenience.

\begin{Lem} \label{Lem:complex}
Suppose that $X$ is a $\CAT(0)$ space, and $Y \sub X$ is a
subset with a finite $cs$-bounded covering with $s$-multiplicity at most $n + 1$,
for some integer $n$ and positive constants $c$ and $s$. Then there exist
\begin{enumerate}
\item[\rm (1)]
a finite simplicial complex $\Sig$ of dimension at most $n$, metrized as a
subcomplex of some simplex of edge length $s$ in a Euclidean space;
\item[\rm (2)]
a constant $L > 0$ depending only on $n$ and $c$, and an $L$-Lipschitz
map $\psi \colon Y \to \Sig$;
\item[\rm (3)]  
a Lipschitz map $\phi \colon \Sig \to X$ with $\phi(\Sig^{(0)}) \sub Y$ that is
$L$-Lipschitz on every simplex of\/ $\Sig$, such that $\phi \circ \psi$ is
$L$-Lipschitz and satisfies $d(x,\phi \circ \psi(x)) \le Ls$ for all $x \in Y$.
\end{enumerate}
\end{Lem}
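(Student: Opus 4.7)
The plan is the standard nerve-with-partition-of-unity construction, adapted by using an \emph{enlarged} nerve so that the composition $\phi\circ\psi$ comes out globally Lipschitz. After replacing each $B_i$ with $B_i\cap Y$ (which preserves the diameter bound and can only decrease the $s$-multiplicity), I may assume $B_i\subseteq Y$ and pick $y_i\in B_i$ for each $i\in I$. Introduce the $1$-Lipschitz bumps $\rho_i(x):=\max\{0,s/4-d(x,B_i)\}$, their sum $\rho(x):=\sum_i\rho_i(x)$, and the coefficients $\psi_i:=\rho_i/\rho$. The condition $\rho_i(x)>0$ forces $B_i$ to meet the ball $B(x,s/4)$ of diameter $\le s/2\le s$, so the $s$-multiplicity hypothesis yields at most $n+1$ nonzero terms at every point; the lower bound $\rho(x)\ge s/4$ on $Y$ is immediate from $x\in B_{i_0}$ for some~$i_0$.

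For the simplicial complex I would take $\Sig$ to be the nerve of the enlarged cover $\{N_{s/2}(B_i)\}_{i\in I}$: a subset $\{i_0,\dots,i_k\}$ spans a simplex iff some $z\in X$ satisfies $d(z,B_{i_j})<s/2$ for every~$j$, which again forces $\dim\Sig\le n$. I metrize $\Sig$ as a subcomplex of $\Delta_I\subset\R^I$ whose vertices are $v_i:=(s/\sqrt 2)\,e_i$, so that all edges have length $s$ and $\diam\Sig\le s$, and set $\psi(x):=\sum_i\psi_i(x)\,v_i$. Since the support $J(x):=\{i:\psi_i(x)>0\}$ is a simplex of $\Sig$ witnessed by $x$, $\psi$ lands in $\Sig$. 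The Lipschitz constant of $\psi$ then follows from $|\rho_i(x)-\rho_i(y)|\le d(x,y)$, $\rho\ge s/4$, and the observation that $|J(x)\cup J(y)|\le n+1$ whenever $d(x,y)<s/4$; pairs with $d(x,y)\ge s/4$ are absorbed by $\diam\Sig\le s$.

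Next I would extend the vertex map $v_i\mapsto y_i$ to $\phi\colon\Sig\to X$ by the CAT$(0)$ Karcher mean $\phi(p):=\arg\min_{x\in X}\sum_j\lambda_j(p)\,d^2(x,y_{i_j})$, which is well-defined by strict convexity of $d^2$ and agrees across faces because vanishing weights drop out. Sturm's variance inequality, symmetrized in $p,p'$ and combined with the elementary bound $|d^2(\phi(p),y_{i_j})-d^2(\phi(p'),y_{i_j})|\le 2D\,d(\phi(p),\phi(p'))$ (where $D$ is the diameter of the vertex images on the simplex), yields $d(\phi(p),\phi(p'))\le D\,\|\lambda-\lambda'\|_1$. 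The common witness $z$ for a simplex of $\Sig$ gives $d(y_{i_j},y_{i_l})\le(1+2c)s$, so $D\le(1+2c)s$; together with $\|\lambda-\lambda'\|_1\le\sqrt{n+1}\,(\sqrt 2/s)\,\|p-p'\|$ this bounds $\phi$ simplex-wise by a Lipschitz constant depending only on $n$ and $c$.

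The main technical obstacle is the \emph{global} Lipschitz continuity of $\phi\circ\psi$, since the Euclidean segment from $\psi(x)$ to $\psi(y)$ can leave $\Sig$ and the simplex-wise bound on $\phi$ does not chain automatically. The enlarged-nerve choice resolves this: for $d(x,y)<s/4$, every $i\in J(x)\cup J(y)$ satisfies $d(x,B_i)<s/4+s/4=s/2$, so the union is itself a simplex of $\Sig$ witnessed by $x$, $\psi(x)$ and $\psi(y)$ lie in a common simplex, and the simplex-wise Lipschitz bound on $\phi$ composes with the Lipschitz bound on $\psi$. For $d(x,y)\ge s/4$ I would apply the triangle inequality together with the pointwise estimate $d(x,\phi\psi(x))\le(1/4+c)s$, itself immediate from Sturm's inequality $d^2(x,\phi\psi(x))\le\sum_i\psi_i(x)\,d^2(x,y_i)\le\max_{i\in J(x)}d^2(x,y_i)$ and the fact that each $y_i$ with $i\in J(x)$ is within $s/4+cs$ of $x$. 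Combining these three estimates yields a single constant $L$ depending only on $n$ and $c$ that simultaneously controls the Lipschitz norms of $\psi$, $\phi|_\sigma$, and $\phi\circ\psi$, and that also absorbs the bound $d(x,\phi\psi(x))\le Ls$ required by the lemma.
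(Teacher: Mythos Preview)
Your argument is correct and follows the same nerve-plus-partition-of-unity template, but departs from the paper's proof in two substantive ways worth noting.

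For the map $\phi$, the paper extends the vertex assignment $se_i\mapsto z_i$ skeleton by skeleton, using geodesics on the $1$-skeleton and then the generalized Kirszbraun theorem for $\CAT(0)$ targets~\cite{LanS-KT} to fill in the higher cells. Your barycenter construction gives $\phi$ in one stroke, is automatically face-compatible, and the simplex-wise Lipschitz bound comes out of the variance inequality without an external extension theorem; this is arguably more self-contained. For the global Lipschitz control of $\phi\circ\psi$, the paper argues the dichotomy ``either $\psi(x),\psi(y)$ lie in a common simplex, or $\tau_i(y)=0$ for the index $i$ with $x\in B_i$'' and in the latter case gets $d(x,y)\ge s/2$. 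Your device---taking $\Sig$ to be the nerve of the $s/2$-thickenings while giving the bumps $\rho_i$ the strictly smaller support radius $s/4$---makes this step more transparent: whenever $d(x,y)<s/4$ the union $J(x)\cup J(y)$ is itself a simplex of $\Sig$ witnessed by $x$, so $\psi(x)$ and $\psi(y)$ share a simplex automatically. The price is a slightly larger complex (you allow witnesses in $X$ rather than just $Y$) and different numerical constants, but the dimension bound $\dim\Sig\le n$ is unaffected. One minor point you leave implicit: the \emph{global} (as opposed to simplex-wise) Lipschitz continuity of $\phi$ required in~(3) follows, as the paper also notes, from local Lipschitzness together with compactness of the finite complex~$\Sig$.
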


\begin{proof}
Let $(B_i)_{i=1}^N$ be a finite $cs$-bounded covering of $Y$ with $s$-multiplicity at most $n + 1$.
Suppose that $\es \ne B_i \sub Y$, and define
\[
  \tau = (\tau_1,\ldots,\tau_N) \colon Y \to \R^N, \quad
  \tau_i(x) := \max\{s - 2\,d(x,B_i), 0\}.
\]
Note that $\bar\tau(x) := \sum_{i=1}^N \tau_i(x) \ge s$ for all $x \in Y$,
and there are always at most $n+1$ non-zero summands. It follows that
$\psi := s\,\tau/(\sqrt{2}\,\bar\tau)$ maps $Y$ into the standard
$(N-1)$-simplex $\Del \sub \R^N$ of edge length $s$,
and the minimal subcomplex $\Sig \sub \Delta$ containing $\psi(Y)$ has
dimension at most $n$. Given $x,y \in Y$, there are at most $2n + 2$
indices $i$ such that $\tau_i(x) + \tau_i(y) > 0$. For each of them,
\begin{align*}
  \biggl| \frac{s\,\tau_i(x)}{\sqrt{2}\,\bar\tau(x)}
  - \frac{s\,\tau_i(y)}{\sqrt{2}\,\bar\tau(y)} \biggr|
  &\le \frac{s\,|\tau_i(x) - \tau_i(y)|}{\sqrt{2}\,\bar\tau(x)} 
    + \frac{s\,\tau_i(y)\,|\bar\tau(y) - \bar\tau(x)|}
    {\sqrt{2}\,\bar\tau(x)\,\bar\tau(y)} \\
  &\le \sqrt{2}\,d(x,y)
    + \frac{\tau_i(y)}{\bar\tau(y)} \cdot (2n+2) \cdot \sqrt{2}\,d(x,y), 
\end{align*}
because $s \le \bar\tau(x)$ and the $\tau_i$ are $2$-Lipschitz.
Taking the sum, we conclude that $\psi$ is $4\sqrt{2}\,(n + 1)$-Lipschitz.

The reverse map $\phi \colon \Sig \to X$ is constructed by induction on the
skeleta of $\Sig$. For $i = 1,\ldots,N$, choose $z_i \in B_i$ and put
$\phi(s\,e_i) := z_i$. For $\es \ne I \sub \{1,\ldots,N\}$, let $\Del_I$
denote the subsimplex of $\Delta$ with vertex set $\{s\,e_i: i \in I\}$.
Notice that $\Del_I$ belongs to $\Sig$ if and only if there exists
a point $x \in Y$ with $\tau_i(x) > 0$ for all $i \in I$.
Then $d(x,B_i) < s/2$ for all $i \in I$, and the set of all
$z_i = \phi(s\,e_i)$ with $i \in I$ has diameter at most $(1 + 2c)s$.
For every $1$-simplex $\Del_I \sub \Sig^{(1)}$ with $I = \{i,j\}$,
define $\phi|_{\Del_I}$ as the constant speed geodesic connecting $z_i$ and
$z_j$, so that $\phi|_{\Del_I}$ is Lipschitz with constant $L_1 := 1 + 2c$.
Suppose now that $\phi$ is defined on $\Sig^{(i)}$ and $L_i$-Lipschitz
on every $i$-simplex. If $\Del_I \sub \Sig^{(i+1)}$ is an $(i+1)$-simplex,
then $\phi|_{\partial \Del_I}$ is Lipschitz with constant $L_{i+1} := c_iL_i$
for some $c_i > 1$ depending only on $i$. Hence, by the generalized Kirszbraun
theorem~\cite[Theorem~A]{LanS-KT},
$\phi|_{\partial \Del_I}$ extends to an $L_{i+1}$-Lipschitz map
from $\Del_I$ into~$X$. This procedure eventually yields a map
$\phi \colon \Sig \to X$ that is $L_k$-Lipschitz on every
simplex of $\Sig$, where $k \le n$ is the dimension of~$\Sig$.
Furthermore, $\phi$ is Lipschitz (with a constant that is possibly not
bounded in terms of $n$ and $c$), because $\phi$ is locally Lipschitz and
$\Sig$ is compact.

Now consider the composition $f := \phi \circ \psi \colon Y \to X$.
Let $x \in Y$, and pick $i$ with $x \in B_i$. Then $\tau_i(x) = s$,
in particular $\psi(x)$ belongs to a simplex of $\Sig$ with vertex $s\,e_i$,
and
\[
  d(x,f(x)) \le d(x,z_i) + d(\phi(s\,e_i),\phi(\psi(x)))
  \le (c + L_k)\,s.  
\]
If $y \in Y$ is another point, then either $\psi(x)$ and $\psi(y)$ lie
in a common simplex of $\Sig$, so that
\[
  d(f(x),f(y)) \le L_k\,d(\psi(x),\psi(y))
  \le 4\sqrt{2}\,(n+1)\,L_k\,d(x,y),
\]
or $\tau_i(y) = 0$, in which case $d(x,y) \ge d(y,B_i) \ge s/2$ and
\begin{align*}
  d(f(x),f(y)) &\le d(x,y) + d(x,f(x)) + d(y,f(y)) \\
  &\le d(x,y) + 2(c + L_k)\,s \\
  &\le (1 + 4c + 4L_k)\,d(x,y).
\end{align*}
This completes the proof.
\end{proof}


\section{Metric currents} \label{sect:currents}

We now review the basic notions of the theory of currents in a complete metric space $X$, introduced by Ambrosio--Kirchheim in~\cite{AmbK}. 
We restrict ourselves to currents with compact support, the main reason being that the existence result for minimizing fillings of integral cycles in $\CAT(0)$ spaces (Theorem~\ref{Thm:min-fill}), which plays a central role in this paper, relies on this assumption. 
It also allows to proceed similarly as in~\cite{Lan-LC}, without including a finite mass condition from the beginning, in analogy to the classical theory.
The same setup is used in~\cite{Zue}, which provides further details.

We write $\Lip(X)$ for the vector space of real valued Lipschitz functions on~$X$, equipped with the notion of pointwise convergence with uniformly bounded Lipschitz constants, which implies uniform convergence on compact subsets. 
For $k \ge 0$, a general {\em $k$-dimensional current $T$} in $X$ with support in some compact set $K \sub X$ is a $(k+1)$-linear functional $T \colon \Lip(X)^{k+1} \to \R$ that is jointly sequentially continuous and satisfies $T(f_0,\ldots,f_k) = 0$ whenever $\spt(f_0) \cap K = \es$ or one of the functions $f_1,\ldots,f_k$ is constant on the $\del$-neighborhood of $\spt(f_0)$ for some $\del > 0$. 
There then exists a smallest such compact set $K$, the {\em support}\/ $\spt(T)$ of $T$, and it follows by continuity that $T(f_0,\ldots,f_k) = 0$ if $f_0 = 0$ on $\spt(T)$ or if one of $f_1,\ldots,f_k$ is constant on $\spt(f_0)$ (see~\cite[Lemma~2.3]{Zue}).
We denote the vector space of $k$-currents with compact support in $X$ by $\cD_{k,\cs}(X)$. 

The {\em boundary} of $T \in \cD_{k,\cs}(X)$, for $k \ge 1$, is the $(k-1)$-current $\d T$ defined by
\[
\d T(f_0,\ldots,f_{k-1}) := T(1,f_0,\ldots,f_{k-1}).
\]
To see that indeed $\d T \in \cD_{k-1,\cs}(X)$, note that if, say, $f_1$ is constant on some $\del$-neighborhood $U$ of $\spt(f_0)$, then one can choose $\sig \in \Lip(X)$ such that $\sig \equiv 1$ on $\spt(f_0)$ and $\sig \equiv 0$ on $X \sm U$; then $T(1,f_0,\ldots,f_{k-1}) = T(\sig,f_0,\ldots,f_{k-1}) = 0$ because $f_0 \equiv 0$ on $\spt(1 - \sig)$ and $f_1$ is constant on $\spt(\sig)$.
Note further that $\spt(\d T) \sub \spt(T)$, and $\d(\d T) = 0$ for $k \ge 2$.

For a Lipschitz map $\phi \colon X \to Y$ to another complete metric space $Y$, the {\em pushforward}
$\phi_\#T \in \cD_{k,\cs}(Y)$ of $T \in \cD_{k,\cs}(X)$ is defined by
\[
\phi_\#T(g_0,\ldots,g_k) := T(g_0 \circ \phi,\ldots,g_k \circ \phi)
\]
for $(g_0,\ldots,g_k) \in \Lip(Y)^{k+1}$. If $k \ge 1$, then $\d(\phi_\#T) = \phi_\#(\d T)$.

The {\em mass}\/ $\M_V(T)$ of $T \in \cD_{k,\cs}(X)$ in an open set $V \sub X$ is defined as the supremum of $\sum_{\lam} T(f_{\lam,0},\dots,f_{\lam,k})$ over all finite families of tuples such that the sum $\sum_\lam |f_{\lam,0}|$ is at most $1$ and has support in $V$, and $f_{\lam,1},\dots,f_{\lam,k}$ are $1$-Lipschitz for all $\lam$.
The total mass $\M(T) := \M_X(T)$ is a norm on
\[
\M_{k,\cs}(X) := \{T \in \cD_{k,\cs}(X): \M(T) < \infty\}.
\] 
For an $L$-Lipschitz map $\phi \colon X \to Y$,
\[
\M(\phi_\#T) \le L^k \,\M(T).
\]
If $T \in \M_{k,\cs}(X)$, then the set function $\|T\| \colon 2^X \to [0,\infty)$ defined by
\[
\|T\|(A) := \inf\{\M_V(T): V \sub X \text{ open, } A \sub V\}
\]
is a Borel regular outer measure, $\|T\|(X \sm \spt(T)) = 0$, 
and if $f_i$ is $l_i$-Lipschitz for $i = 1,\ldots,k$, then
\[
|T(f_0,\ldots,f_k)| \le \int_X |f_0| \,d\|T\| \cdot l_1 \cdot \ldots \cdot l_k
\]
(see~\cite[Theorem~4.3]{Lan-LC} and~\cite[Lemma~2.4]{Zue}). 
This inequality implies further that $T$ extends canonically to tuples whose first entry is in $L^1(\|T\|)$. 
In particular, for a Borel set $B \sub X$ with characteristic function $\chi_B$,
\[
(T \on B)(f_0,\ldots,f_k) := T(\chi_B f_0,f_1,\ldots,f_k)
\]
defines a current $T \on B \in \M_{k,\cs}(X)$ with mass $\M(T \on B) = \|T\|(B)$.

As a basic example, every function $\theta \in L^1(\R^k)$ with compact support
induces a current $T = \bb{\theta} \in \M_{k,\cs}(\R^k)$, 
\[
\bb{\theta}(f_0,\ldots,f_k) := \int_{\R^k} \theta\,f_0 \det[\partial_j f_i]_{i,j=1}^k \,dx
\]
(the partial derivatives exist almost everywhere by Rademacher's theorem), 
with $d\|T\| = \theta\,dx$. 
For a bounded Borel set $D \sub \R^k$, we will write $\bb{D}$ instead of $\bb{\chi_D}$.  
Notice that $\bb{D}$ extends the classical (de Rham) current given by integration of smooth differential $k$-forms $f_0\,d f_1 \wedge \ldots \wedge d f_k$ on $D$.

Recall that a subset $E$ of a metric space is {\em countably $k$-rectifiable} if $E$ is the union of countably many Lipschitz images of bounded subsets of $\R^k$. 
A current $T \in \M_{k,\cs}(X)$ is {\em integer rectifiable} if $\|T\|$ is concentrated on some countably $k$-rectifiable Borel set, 
and if for every Borel set $B \sub X$ and every Lipschitz map $\phi \colon X \to \R^k$, the current $\phi_\#(T \on B)$ is of the form $\bb{\theta}$ for some integer valued $\theta \in L^1(\R^k)$. 
Then $\|T\|$ is absolutely continuous with respect to $k$-dimensional Hausdorff measure.
Pushforwards and restrictions to Borel sets of integer rectifiable currents are again integer rectifiable.

A $k$-current $T$ is called an {\em integral current}\/ if $T$ is integer rectifiable and, in case $k \ge 1$, $\d T$ has finite mass; then $\d T$ is integer rectifiable as well (see \cite[Theorem~8.6]{AmbK}). 
This gives a chain complex of abelian groups $\bI_{k,\cs}(X)$.
If $T \in \bI_{k,\cs}(X)$, $k \ge 1$, and $\rho \colon X \to \R$ is a $1$-Lipschitz function with sublevel sets $B_r := \{\rho \le r\}$, then for almost every $r \in \R$ the {\em slice}
\[
S_r := \d(T \on B_r) - (\d T) \on B_r
\]
is an integral $(k-1)$-current with support in $\spt(T) \cap \{\rho = r\}$.
Furthermore, for every subinterval $(a,b) \sub \R$, the coarea inequality
\[
\int_a^b \M(S_r) \,dr \le \|T\|(\{a < \rho < b\})
\]
holds (compare~\cite[Theorem~5.6]{AmbK} and~\cite[Theorem~6.2]{Lan-LC}).

\begin{Rem} \label{Rem:chains}
If $\sig \colon \Del \to X$ is a Lipschitz map of a $k$-simplex $\Del \sub \R^k$, then $\sig_\#\bb{\Del} \in \bI_{k,\cs}(X)$; thus every Lipschitz singular $k$-chain with integer coefficients determines an integral $k$-current. 
Conversely, an approximation result holds in case $X$ is a $\CAT(0)$ space with finite Nagata dimension:
if $T \in \bI_{k,\cs}(X)$, $k \ge 1$, then for every $\eps > 0$ there exists an integral current $T_\eps$ induced by a Lipschitz chain such that $\M(T-T_\eps) + \M(\d T - \d T_\eps) < \eps$; 
moreover, if $\d T$ is already given by a Lipschitz cycle, then $T_\eps$ can be chosen such that $\d T_\eps = \d T$
(compare~\cite[Corollary~1.5, Lemma~7.4]{BasWY} and~\cite[Theorem~1.3]{Gol}).
\end{Rem}

We now assume again that $X$ is a $\CAT(0)$ space and collect some crucial results. 
Given a cycle $T \in \bI_{k,\cs}(X)$, for $k \ge 1$, we call a current $S \in \bI_{k+1,\cs}(X)$ a {\em filling} of $T$ if $\d S = T$. 
A geodesic cone construction yields:

\begin{Pro}[coning inequality] \label{Pro:coning}
For every cycle $T \in \bI_{k,\cs}(X)$ with support in a
closed ball $B(x,r)$ there exists a filling
$S \in \bI_{k+1,\cs}(X)$ with
\[
  \M(S) \le \frac{r}{k+1}\,\M(T).
\]
\end{Pro}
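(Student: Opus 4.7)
The plan is the classical cone construction, adapted to the metric current setting of Ambrosio--Kirchheim. Fix the center $x$ and define the homotopy
\[
  h \colon [0,1] \times X \longrightarrow X, \qquad
  h(t,y) := \gam_{xy}(t),
\]
where $\gam_{xy}$ denotes the constant speed geodesic from $x$ to $y$ (unique by the $\CAT(0)$ assumption). The convexity of the distance function in a $\CAT(0)$ space gives
\[
  d(h(t,y),h(t,y')) \le t\,d(y,y'), \qquad
  d(h(t,y),h(t',y)) = |t-t'|\,d(x,y),
\]
so $h$ is Lipschitz on $[0,1]\times B(x,r)$, and its restriction $h(t,\cdot)$ is $t$-Lipschitz while its restriction $h(\cdot,y)$ is $d(x,y)$-Lipschitz.

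Now define the filling as
\[
  S := h_\#\bigl(\bb{[0,1]} \times T\bigr) \;\in\; \bI_{k+1,\cs}(X),
\]
using the product of integral currents from~\cite{AmbK}. Since $T$ is a cycle, the boundary formula for products yields
\[
  \d\bigl(\bb{[0,1]} \times T\bigr)
  = \del_1 \times T - \del_0 \times T - \bb{[0,1]} \times \d T
  = \del_1 \times T - \del_0 \times T.
\]
Pushing forward and using $h(1,\cdot) = \id$ and $h(0,\cdot) \equiv x$ (so that $h_\#(\del_0 \times T) = 0$ because the pushforward of a $k$-current under a constant map vanishes for $k \ge 1$; if $k = 0$ the same follows from the coning of $\d T = 0$) gives $\d S = T$, as desired.

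For the mass bound, I would use the product mass formula $\|\bb{[0,1]}\times T\| = \mathcal L^1 \otimes \|T\|$ together with the pointwise Jacobian estimate that comes from the Lipschitz constants above: at the point $(t,y)$, the map $h$ contracts the $X$-directions by at most $t$ and extends the $[0,1]$-direction by at most $d(x,y) \le r$. Hence
\[
  \M(S) \le \int_0^1 \int_{B(x,r)} t^k \cdot d(x,y)\; d\|T\|(y)\,dt
  \le r\,\M(T)\int_0^1 t^k\,dt
  = \frac{r}{k+1}\,\M(T).
\]
This is the only quantitative step; it follows cleanly once one writes the pushforward mass as an integral of the Jacobian of $h$ against $\mathcal L^1 \otimes \|T\|$, which in turn is controlled by the product of the two Lipschitz constants of $h$ on the respective factors (this is the current analogue of the area formula, and is the step requiring slight care because the bound is not simply a product of the two global Lipschitz constants of $h$ but rather the factorwise constants $t$ and $d(x,y)$). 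Finally, $\spt(S) \sub B(x,r)$, so $S \in \bI_{k+1,\cs}(X)$ as required.
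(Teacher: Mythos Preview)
The paper does not give a proof of this proposition; it simply cites \cite[Theorem~4.1]{Wen-FI}. Your sketch is precisely the geodesic cone construction underlying that reference: the boundary computation via the product formula is routine, and the anisotropic mass estimate---a factor $t^k$ from the $X$-directions and a factor $d(x,y)\le r$ from the $[0,1]$-direction---is exactly the content of Wenger's argument. You are right to flag the mass step as the one requiring care; in the metric-current setting it is carried out directly from the definition of mass and the multilinear expression for $h_\#(\bb{[0,1]}\times T)$ rather than via an abstract area formula, but the bound you state is the one obtained there.
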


See~\cite[Theorem~4.1]{Wen-FI}. 
An integral current is {\em minimizing} if it has minimal mass among all integral currents with the same boundary.

\begin{Cor}[monotonicity] \label{Cor:monotonicity}
If\/ $V \in \bI_{k+1,\cs}(X)$ is minimizing and $x \in \spt(V) \sm \spt(\d V)$, 
then
\[
  \om_{k+1} \le \frac{\|V\|(B(x,r))}{r^{k+1}}
  \le \frac{\|V\|(B(x,s))}{s^{k+1}}
\]
for $0 < r \le s \le d(x,\spt(\d V))$, where $\om_{k+1}$ denotes the Lebesgue
measure of the unit ball in $\R^{k+1}$.
\end{Cor}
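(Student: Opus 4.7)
Set $\rho(y) := d(x,y)$, $B_t := B(x,t)$, and $m(t) := \|V\|(B_t)$. Since $s \le d(x,\spt(\d V))$, for every $t \in (0,s]$ the ball $B_t$ is disjoint from $\spt(\d V)$, so $(\d V) \on B_t = 0$ and the slice
\[
  S_t = \d(V \on B_t) - (\d V) \on B_t = \d(V \on B_t)
\]
is an integral $k$-current supported in $\spt(V) \cap \{\rho = t\}$. The slicing inequality recalled at the end of Section~\ref{sect:currents} gives $\int_a^b \M(S_t)\,dt \le m(b) - m(a)$, so the non-decreasing function $m$ is differentiable a.e.\ with $m'(t) \ge \M(S_t)$.

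For the right-hand (monotonicity) inequality, the main step is a cone competition. For a.e.\ $t \in (0,s]$, let $C_t \in \bI_{k+1,\cs}(X)$ be the geodesic cone filling of $S_t$ from $x$ provided by Proposition~\ref{Pro:coning}; then $\d C_t = S_t$ and $\M(C_t) \le \frac{t}{k+1}\M(S_t)$. The competitor $V' := V - V \on B_t + C_t$ is integral with $\d V' = \d V$, and minimality of $V$ yields $\M(V) \le \M(V') \le \M(V) - m(t) + \M(C_t)$, hence
\[
  m(t) \le \frac{t}{k+1}\,\M(S_t) \le \frac{t}{k+1}\,m'(t).
\]
Thus the absolutely continuous part of $dm$ dominates $(k+1)(m(t)/t)\,dt$, and the remaining singular part of $dm$ is non-negative since $m$ is non-decreasing. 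A direct computation then gives $\frac{d}{dt}[m(t)/t^{k+1}] \ge 0$, so $t \mapsto m(t)/t^{k+1}$ is non-decreasing on $(0, d(x,\spt(\d V))]$, which is the second inequality.

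For the sharp lower bound $m(r) \ge \om_{k+1}\, r^{k+1}$, I would invoke the density result for integer rectifiable metric currents from \cite{AmbK}: at $\|V\|$-almost every $y \in \spt(V)$ the upper density satisfies $\limsup_{\rho \to 0^+} \|V\|(B(y,\rho))/\rho^{k+1} \ge \om_{k+1}$. For any such $y$ with $y \notin \spt(\d V)$, the monotonicity just established (applied with $y$ in place of $x$), combined with the fact that a non-decreasing function agrees with its limit from the right at zero, forces $\|V\|(B(y,\rho)) \ge \om_{k+1}\,\rho^{k+1}$ for all $\rho \in (0, d(y,\spt(\d V))]$. Since these good points are dense in the open subset $\{z \in \spt(V) : d(z,\spt(\d V)) > 0\}$ of $\spt(V)$, which contains $x$, I would choose a sequence $y_n \to x$ with the property, set $r_n := r - d(x,y_n)$, and use $r_n \le d(y_n,\spt(\d V))$ together with $B(y_n, r_n) \sub B(x,r)$ to conclude
\[
  m(r) \ge \|V\|(B(y_n, r_n)) \ge \om_{k+1}\,r_n^{k+1} \longrightarrow \om_{k+1}\,r^{k+1}
\]
as $n \to \infty$.

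The only non-elementary input is the sharp density-one lower bound at generic support points, which I would take as a black box from the Ambrosio--Kirchheim theory rather than reprove; the remainder is a direct adaptation of the classical Euclidean monotonicity formula, with the radial cone from $x$ replaced by the CAT$(0)$ geodesic cone supplied by Proposition~\ref{Pro:coning}.
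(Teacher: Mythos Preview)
The paper supplies no argument here; it merely cites~\cite[Corollary~4.4]{Wen-FI}. Your outline is precisely the standard proof underlying that reference: slicing gives $m'(t) \ge \M(S_t)$ for a.e.\ $t$; comparing the minimizer $V$ against the competitor $V - V\on B_t + C_t$, where $C_t$ is the geodesic cone over $S_t$, yields $m(t) \le \frac{t}{k+1}\M(S_t)$ and hence $(m(t)/t^{k+1})' \ge 0$ in the distributional sense; and the a.e.\ lower density bound is then transferred to every $x \in \spt(V)\sm\spt(\d V)$ by the approximation through generic points that you describe. All steps are sound.

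One small caution concerning the black box: the sharp constant $\om_{k+1}$ in the a.e.\ density bound does not fall out of the general structure theory in~\cite{AmbK} alone, whose area-factor estimate (their Lemma~9.2) gives only a lower bound of order $(k+1)^{-(k+1)/2}$ for the ratio of mass to Hausdorff measure. The sharp form uses Kirchheim's metric density theorem together with a further inequality for the mass area factor, as carried out in~\cite{Wen-FI}; so the correct citation for this step is really Wenger's paper rather than~\cite{AmbK}. In any case, since the paper uses only the monotonicity inequality downstream (in the proof of Lemma~\ref{Lem:min-triangles}, via the extension trick), the exact value of the constant is immaterial for what follows.
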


See~\cite[Corollary 4.4]{Wen-FI}.

\begin{Thm}[Euclidean isoperimetric inequality] \label{Thm:eucl-isop}
For every cycle $T \in \bI_{k,\cs}(X)$ there exists a filling
$S \in \bI_{k+1,\cs}(X)$ with 
\[
  \M(S) \le \gam_k\,\M(T)^{1 + 1/k},
\]
where the constant $\gam_k$ depends only on $k$.
\end{Thm}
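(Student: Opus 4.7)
This is Wenger's Euclidean isoperimetric inequality in $\CAT(0)$ spaces. I would prove it by induction on $k$, combining the coning inequality (Proposition~\ref{Pro:coning}) with a slicing–decomposition procedure in the spirit of Gromov and Wenger. The base case $k=1$ can be handled directly by iterated slicing at geodesic spheres around a point of $\spt(T)$ and coning the two resulting pieces separately, so I focus on the inductive step.

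Let $T \in \bI_{k,\cs}(X)$ be a cycle of mass $M$ and assume the inequality in dimension $k-1$ with constant $\gam_{k-1}$. Fix $x_0 \in \spt(T)$, set $\rho := d(\cdot,x_0)$ and $m(r) := \|T\|(\ol B(x_0,r))$, and recall that for almost every $r > 0$ the slice $S_r = \d(T \on \ol B(x_0,r))$ is an integral $(k-1)$-cycle with $\M(S_r) \le m'(r)$.

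\textbf{Step 1 (good radius).} Using a differential-inequality argument, I would produce $r \le \alpha M^{1/k}$ with $\M(S_r) \le \beta\,m(r)^{(k-1)/k}$, for constants $\alpha,\beta$ depending only on $k$. Indeed, if no such $r$ existed, then $m' > \beta^{-1} m^{(k-1)/k}$ would hold a.e.\ on $(0,\alpha M^{1/k}]$, integrating to $m(r) \ge c_\beta r^k$ and contradicting $m \le M$ once $\alpha$ is large enough.

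\textbf{Step 2 (fill the slice).} Apply the induction hypothesis to $S_r$ to get $U_r \in \bI_{k,\cs}(X)$ with $\d U_r = S_r$ and
\[
\M(U_r) \le \gam_{k-1}\,\M(S_r)^{k/(k-1)} \le \gam_{k-1}\,\beta^{k/(k-1)}\,m(r).
\]
By strengthening the inductive statement to track the support of the filling (a standard by-product of the same construction), $U_r$ can be taken with $\spt(U_r) \sub \ol B(x_0,Cr)$.

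\textbf{Step 3 (decompose and cone).} The cycle $T_1 := (T \on \ol B(x_0,r)) - U_r$ is supported in $\ol B(x_0,Cr)$, and Proposition~\ref{Pro:coning} yields a filling $W_1$ with
\[
\M(W_1) \lesssim r\,\bigl(m(r) + \M(U_r)\bigr) \lesssim M^{1/k}\,m(r).
\]
The complementary cycle $T_2 := T - T_1 = T \on (X \sm \ol B(x_0,r)) + U_r$ is again a cycle, with $\M(T_2) \le M - \bigl(1 - \gam_{k-1}\beta^{k/(k-1)}\bigr)\,m(r)$. Choosing $\beta$ small enough that the parenthetical factor is positive and $m(r)$ a definite fraction of $M$ (which can be arranged by adjusting $\alpha$ in Step~1), one gains a fixed proportion of the mass in a single step.

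\textbf{Step 4 (iterate).} Repeating the procedure on $T_2, T_3, \ldots$ produces cycles with $\M(T_n) \le (1-\theta)^{n-1} M$ for some $\theta > 0$ and corresponding cone fillings $W_n$ of mass $\lesssim \M(T_n)^{1/k}\,\M(T_n) = \M(T_n)^{1+1/k}$. Summing the resulting geometric series gives $S := \sum_n W_n \in \bI_{k+1,\cs}(X)$ with $\d S = T$ and $\M(S) \le \gam_k\,M^{1+1/k}$.

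\textbf{Main obstacle.} The delicate point is Step~1: balancing the competing effects so that $r$ is small enough for the coning bound $r\,m(r)$ to behave like $M^{1/k}\,m(r)$, while simultaneously $\M(S_r)$ is small enough that the inductive filling cost $\M(U_r)$ does not overwhelm the mass $m(r)$ extracted from $T$. The other technical point, which has to be carried along in the induction, is maintaining a support (diameter) bound on $U_r$, without which $T_1$ cannot be guaranteed to sit inside a small ball for the coning step.
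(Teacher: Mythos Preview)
The paper does not give a proof of this theorem; it simply cites~\cite[Theorem~1.2]{Wen-EII}. Your sketch is an attempt to reconstruct Wenger's argument, and the overall architecture---slice, fill the slice inductively, cone the localized piece, iterate---is indeed his. But there is a real gap.

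The problematic claim is in Step~3: that ``$m(r)$ [is] a definite fraction of $M$ (which can be arranged by adjusting $\alpha$ in Step~1)''. It cannot. The differential-inequality argument of Step~1 only guarantees the existence of \emph{some} $r \in (0,\alpha M^{1/k}]$ with a small slice; nothing prevents this good radius from occurring arbitrarily close to~$0$, in which case $m(r)$ is arbitrarily small. A single extraction around one point therefore need not remove a fixed proportion of the mass, and the geometric decay $\M(T_n) \le (1-\theta)^{n-1}M$ claimed in Step~4 is unjustified. Without it you cannot conclude $\M(T_n) \to 0$, so the iteration may never exhaust $T$.

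What actually closes this gap in Wenger's proof is precisely the decomposition quoted later in the present paper as Proposition~\ref{Pro:decomp} (from~\cite[Proposition~3.1]{Wen-EII}): one uses the lower density bound for integer rectifiable currents at $\|T\|$-a.e.\ point together with a Vitali-type covering to extract, in a single pass, a finite collection of disjoint \emph{round} pieces $R_i$ (each with $\diam(R_i) \le \beta\,\M(R_i)^{1/k}$) whose total mass is at least $\eps\,\M(T)$. Coning those pieces and iterating on the remainder $T'$ with $\M(T') \le (1-\eps)\,\M(T)$ gives the genuine geometric decay. Your single-point extraction is one building block of this, but not enough by itself.

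Two minor points. In Step~1 the inequality should read $m' > \beta\,m^{(k-1)/k}$, not $\beta^{-1}$. And the support bound you worry about in Step~2 is in fact free in a $\CAT(0)$ space: post-compose any filling of $S_r$ with the $1$-Lipschitz nearest-point projection onto the convex ball $\ol B(x_0,r)$; this fixes $S_r$ and does not increase mass.
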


This is a particular case of~\cite[Theorem~1.2]{Wen-EII} (see also the remark thereafter regarding compact supports). 
The \emph{filling volume} of a cycle $T \in \bI_{k,\cs}(X)$, $k \ge 1$, is defined by
\[
\Fill(T) := \inf\{\M(S) : S \in \bI_{k+1,\cs}(X) \text{ and } \d S = T\}.
\]

\begin{Thm}[minimizing fillings] \label{Thm:min-fill}
For every cycle $T \in \bI_{k,\cs}(X)$ there exists a filling $S \in \bI_{k+1,\cs}(X)$ with
$\M(S) = \Fill(T)$.   
\end{Thm}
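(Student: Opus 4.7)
The proof is by the direct method of the calculus of variations. By Theorem~\ref{Thm:eucl-isop}, $\Fill(T) \le \gam_k\,\M(T)^{1+1/k} < \infty$, so one can choose a minimizing sequence $(S_n) \sub \bI_{k+1,\cs}(X)$ of fillings of $T$ with $\M(S_n) \to \Fill(T)$.

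First I would confine the supports to a common closed ball. Fix $x_0 \in \spt(T)$ and $r_0$ with $\spt(T) \sub B(x_0, r_0)$. Since $X$ is $\CAT(0)$, the nearest-point projection $\pi \colon X \to \ol{B}(x_0, r_0)$ is $1$-Lipschitz and fixes $\spt(T)$ pointwise. Replacing $S_n$ with $\pi_\# S_n$ yields fillings of $T$ with $\spt(S_n) \sub \ol{B}(x_0, r_0)$ and $\M(\pi_\# S_n) \le \M(S_n)$, so the minimizing property is preserved.

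Next I would extract a weakly convergent subsequence via the Ambrosio--Kirchheim compactness theorem for integer rectifiable currents~\cite{AmbK}. The uniform bound on $\M(S_n) + \M(\d S_n) = \M(S_n) + \M(T)$ is immediate. The required equi-tightness of the measures $(\|S_n\|)$ comes from a quasi-monotonicity estimate: for any filling $S$ with $\M(S) \le \Fill(T) + \eps$ and any $p \in \spt(S)$ with $\ol{B}(p, R) \cap \spt(T) = \es$, comparing $S \on B(p, r)$ with the geodesic cone from $p$ over the slice of $S$ at $\d B(p, r)$ (Proposition~\ref{Pro:coning}) shows $\|S\|(B(p, r)) \ge c\,r^{k+1}$ for $r \le R$, up to an error that vanishes with $\eps$. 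This forces the supports $\spt(S_n)$ to be uniformly totally bounded, hence contained in a common compact set, so Ambrosio--Kirchheim yields a weak limit $S$.

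Weak continuity of the boundary gives $\d S = T$, and lower semicontinuity of mass gives $\M(S) \le \liminf_j \M(S_{n_j}) = \Fill(T)$; since $S$ is a filling, $\M(S) = \Fill(T)$. Finally, the exact monotonicity for the minimizer (Corollary~\ref{Cor:monotonicity}) gives $\|S\|(B(p, r)) \ge \om_{k+1}\,r^{k+1}$ at every $p \in \spt(S) \sm \spt(T)$ for $r \le d(p, \spt(T))$, which together with $\M(S) < \infty$ and completeness of $X$ forces $\spt(S)$ to be totally bounded, hence compact, so $S \in \bI_{k+1,\cs}(X)$. The main technical obstacle is the equi-tightness in the compactness step, since closed balls in a non-proper $\CAT(0)$ space need not be compact; the density lower bound for near-minimizers derived from Proposition~\ref{Pro:coning} is what supplies the uniform support control needed to invoke the Ambrosio--Kirchheim theorem.
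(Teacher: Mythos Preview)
The paper does not give its own proof of this statement; it simply records ``See~\cite[Theorem~1.6]{Wen-EII}'' and remarks that the compactness of $\spt(T)$ (rather than properness of $X$) is what makes the argument go through. Your sketch is broadly the strategy of that reference: take a minimizing sequence, push it into a fixed ball via the $1$-Lipschitz $\CAT(0)$ nearest-point retraction, extract a weak limit by Ambrosio--Kirchheim compactness, and conclude via lower semicontinuity of mass and monotonicity.

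The one genuinely delicate point in your outline is the equi-tightness step, and here your sketch is too thin. For a single near-minimizer $S$ with $\M(S) \le \Fill(T) + \eps$, the cone comparison gives only the differential inequality $f(r) \le \frac{r}{k+1}\,f'(r) + \eps$ for $f(r) = \|S\|(B(p,r))$; integrating yields that $(f(r)-\eps)/r^{k+1}$ is monotone where it is positive, but this says nothing at points of $\spt(S)$ where $f(r) \le \eps$ for all relevant $r$. So ``$\|S\|(B(p,r)) \ge c\,r^{k+1}$ up to an error vanishing with $\eps$'' is not literally true at every support point, and one cannot directly conclude uniform total boundedness of the $\spt(S_n)$. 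Wenger's actual argument handles this carefully---one route is to cut away the low-density region (which carries mass $O(\eps)$) before invoking compactness; another is to pass, via an Ekeland-type perturbation, to a sequence whose members are genuine minimizers for nearby functionals and hence enjoy exact monotonicity. Your architecture is right, but this step needs one of these refinements to close.
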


See~\cite[Theorem~1.6]{Wen-EII}. Notice that here $X$ is still a general (complete but not necessarily locally compact) $\CAT(0)$ space; the compactness of $\spt(T)$ suffices.


\section{Minimizing simplices} \label{sect:min-simplices}

This section is based in part on~\cite{GolL} and~\cite[Chapter~8]{Gol-Diss}.

Let again $X$ be a $\CAT(0)$ space. 
We define minimizing simplices inductively as follows.
For $x_0 \in X$, $\bb{x_0} \in \bI_{0,\cs}(X)$ is defined by
\[
  \bb{x_0}(f) := f(x_0).
\]
For $x_0,x_1 \in X$, $\bb{x_0,x_1} \in \bI_{1,\cs}(X)$ is the current induced
by the geodesic $\gam \colon [0,1] \to X$ from $x_0$ to $x_1$,
\[
  \bb{x_0,x_1}(f,g) := \int_0^1 (f \circ \gam)(g \circ \gam)' \,dt,  
\]
whose boundary is $\bb{x_1} - \bb{x_0}$. For $x_0,x_1,x_2 \in X$,
a {\em minimizing triangle} $\bb{x_0,x_1,x_2} \in \bI_{2,\cs}(X)$
is a minimizing filling of the cycle
$\bb{x_0,x_1} + \bb{x_1,x_2} + \bb{x_2,x_0}$.
In general, for $k \ge 2$, a {\em minimizing $k$-simplex}
$\bb{x_0,\ldots,x_k} \in \bI_{k,\cs}(X)$ with vertices 
$x_0,\ldots,x_k \in X$ is a minimizing filling of the cycle
\[
  \sum_{i=0}^k (-1)^i \,\bb{x_0,\ldots,x_{i-1},x_{i+1},\ldots,x_k},
\]
where each $\bb{x_0,\ldots,x_{i-1},x_{i+1},\ldots,x_k}$ is a minimizing 
$(k-1)$-simplex. 
This uses Theorem~\ref{Thm:min-fill}, which does not grant uniqueness. 
It is unclear if minimizing fillings of geodesic triangles in $\CAT(0)$ spaces 
are unique and if they are of the type of the disc. However, in the case
of $X = \R^n$, the construction provides the standard affine simplices
(compare \cite[p.~365]{Fed}).

\begin{Lem} \label{Lem:s-simplex}
Let $X$ be a $\CAT(0)$ space.
If $P = \bb{x_0,\ldots,x_k} \in \bI_{k,\cs}(X)$ is a minimizing $k$-simplex
with $\diam\{x_0,\ldots,x_k\} \le s$, then $\M(P) \le c_k s^k$
for some constant $c_k$ depending only on $k$.
\end{Lem}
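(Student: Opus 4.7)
My approach is an induction on $k \ge 1$. The base case $k = 1$ is immediate: $\bb{x_0, x_1}$ is induced by the constant-speed geodesic from $x_0$ to $x_1$, so $\M(\bb{x_0,x_1}) = d(x_0,x_1) \le s$, and we may take $c_1 = 1$.

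For the inductive step with $k \ge 2$, observe that
\[
\d P = \sum_{i=0}^k (-1)^i \,\bb{x_0,\ldots,\hat x_i,\ldots,x_k}
\]
is a sum of $k+1$ minimizing $(k-1)$-simplices, each with a vertex set of diameter at most $s$. By the induction hypothesis and subadditivity of mass,
\[
\M(\d P) \le (k+1)\, c_{k-1}\, s^{k-1}.
\]
Since $P$ is by definition a minimizing filling of $\d P$, we have $\M(P) = \Fill(\d P)$. Applying the Euclidean isoperimetric inequality (Theorem~\ref{Thm:eucl-isop}) to the $(k-1)$-cycle $\d P$ gives
\[
\M(P) = \Fill(\d P) \le \gam_{k-1}\,\M(\d P)^{k/(k-1)} \le \gam_{k-1} \bigl((k+1) c_{k-1}\bigr)^{k/(k-1)} s^k,
\]
so the bound holds with $c_k := \gam_{k-1} \bigl((k+1) c_{k-1}\bigr)^{k/(k-1)}$, a constant depending only on $k$.

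I do not anticipate a serious obstacle, as the statement is essentially a direct consequence of the recursive definition of minimizing simplices combined with the Euclidean-type filling inequality valid in every $\CAT(0)$ space. An alternative route, sidestepping the constant $\gam_{k-1}$, would carry along a parallel induction showing that the faces of $P$ can be chosen so that $\spt(\d P) \sub \ol{B(x_0, R_k\,s)}$ for some constant $R_k$ depending only on $k$. The idea there is that the nearest-point projection onto a closed convex ball in $X$ is $1$-Lipschitz, so pushing a minimizing $(k-1)$-simplex forward under such a projection yields a current of no larger mass with support in the ball and the same boundary (since the boundary already lies in the ball by the inductive support bound). One then applies the coning inequality (Proposition~\ref{Pro:coning}) in place of Theorem~\ref{Thm:eucl-isop} to obtain an upper bound on $\Fill(\d P)$, and hence on $\M(P)$, of order~$s^k$.
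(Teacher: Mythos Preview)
Your proof is correct and follows essentially the same approach as the paper: induction on $k$ with the inductive step driven by the Euclidean isoperimetric inequality (Theorem~\ref{Thm:eucl-isop}). The paper differs only in the base case, starting at $k=2$ and using the coning inequality (Proposition~\ref{Pro:coning}) to obtain the explicit value $c_2 = 1/2$, whereas you start at $k=1$ trivially; both are fine, and the core argument is identical.
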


\begin{proof}
For $k = 2$, one can take $c_2 = 1/2$ by Proposition~\ref{Pro:coning}.
The general case follows by induction on $k$ from Theorem~\ref{Thm:eucl-isop}.
\end{proof}  

For a current $T \in \bI_{k,\cs}(X)$ and constants $\theta,\rho \ge 0$, 
we say that $T$ has {\em $(\theta,\rho)$-controlled density} if
\[
  \frac{\|T\|(B(x,r))}{r^k} \le \theta
\]
for all $x \in X$ and $r > \rho$.

\begin{Lem} \label{Lem:min-triangles}
In a $\CAT(0)$ space $X$, every minimizing
$1$-simplex has $(2,0)$-controlled density, and every 
minimizing $2$-simplex has $(3\pi/2, 0)$-controlled density.
\end{Lem}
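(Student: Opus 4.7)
I would treat the two assertions separately. For the 1-simplex bound, the plan is direct. The current $\bb{x_0,x_1}$ is induced by the constant-speed geodesic $\gamma\colon[0,1]\to X$ from $x_0$ to $x_1$, and its mass measure is $d(x_0,x_1)$ times the $\gamma$-pushforward of Lebesgue measure on $[0,1]$. For any $x\in X$, the function $t\mapsto d(x,\gamma(t))$ is convex because $X$ is $\CAT(0)$, so the sublevel set $\{t:d(x,\gamma(t))\le r\}$ is a sub-interval $[a,b]$. Since $d(x,\gamma(a)),d(x,\gamma(b))\le r$, one has $d(\gamma(a),\gamma(b))\le 2r$, and the length of $\gamma|_{[a,b]}$ equals $d(\gamma(a),\gamma(b))\le 2r$, which yields $\|\bb{x_0,x_1}\|(B(x,r))\le 2r$ for every $x$ and $r$.

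For the 2-simplex case, let $T=\bb{x_0,x_1,x_2}$ be minimizing with boundary the geodesic triangle $\d T$, fix $x\in X$, and set $\rho(y):=d(x,y)$, $B_r:=\{\rho\le r\}$, $g(r):=\|T\|(B_r)$, $h(r):=\|\d T\|(B_r)$, and $f(r):=\M(S_r)$ with $S_r:=\d(T\on B_r)-(\d T)\on B_r$. Applying the 1-simplex estimate edgewise to the three edges of $\d T$ immediately yields $h(r)\le 6r$. For a.e.\ $r$, the cycle $Z_r:=S_r+(\d T)\on B_r=\d(T\on B_r)$ is supported in $\bar B(x,r)$, so by the coning inequality (Proposition~\ref{Pro:coning}) it admits a filling of mass at most $\tfrac{r}{2}\M(Z_r)=\tfrac{r}{2}(f(r)+h(r))$. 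Replacing $T\on B_r$ by this filling yields an integral current with boundary $\d T$, and minimality of $T$ forces
\[
g(r)\le\tfrac{r}{2}\bigl(f(r)+h(r)\bigr).
\]

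To upgrade this to the pointwise density bound, I would combine the cone estimate with the sharp $\CAT(0)$ Euclidean isoperimetric inequality (Theorem~\ref{Thm:eucl-isop} with the optimal constant $(4\pi)^{-1}$) applied to $Z_r$, which produces the alternative bound $g(r)\le(f(r)+h(r))^2/(4\pi)$. If $\M(Z_r)\le 2\pi r$, the isoperimetric bound alone yields $g(r)\le\pi r^2$; if $\M(Z_r)\in(2\pi r,3\pi r]$, the cone bound gives $g(r)\le\tfrac{3\pi}{2}r^2$. The remaining ``very long perimeter'' regime $\M(Z_r)>3\pi r$ forces $f(r)>3\pi r-h(r)>\pi r$, since $h(r)\le 6r<2\pi r$; here I plan to exploit the slicing identity $f(r)\le g'(r)$ and the total mass bound $\M(T)\le s^2/2$ from Lemma~\ref{Lem:s-simplex} to confine the set of ``bad'' radii, so that interpolation to a nearby radius with shorter perimeter keeps $g(r)/r^2$ below $\tfrac{3\pi}{2}$.

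The main obstacle is precisely this last regime. The cone and isoperimetric inequalities combined with slicing yield only the familiar lower bound $g'(r)\ge 2g(r)/r-h(r)$ (equivalent to monotonicity of $g(r)/r^2$ in the interior of $T$, Corollary~\ref{Cor:monotonicity}), so the upper density bound must come from carefully balancing those inequalities against the total-mass constraint and the structural bound $h(r)\le 6r$ inherited from the 1-simplex case. The precise constant $\tfrac{3\pi}{2}$ reflects the worst-case balance between the cone and isoperimetric fillings of $Z_r$ and is not expected to be sharp; any fixed constant would suffice for the later applications.
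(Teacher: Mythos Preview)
Your treatment of the $1$-simplex case is correct. For the $2$-simplex, however, your approach diverges from the paper's and carries a genuine gap.

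The paper does not work by slicing in $X$. It invokes the extension trick from~\cite{Sta}: one glues a Euclidean half-plane along each geodesic edge of $\d T$ via Reshetnyak's theorem, obtaining a larger $\CAT(0)$ space $\hat X$ in which $T$ extends to a minimizing $2$-current $\hat T$ whose boundary lies arbitrarily far from any given point $x$. Monotonicity (Corollary~\ref{Cor:monotonicity}) in $\hat X$ then bounds $\|\hat T\|(B(x,r))/r^2$ from above by its value at a large radius, where the added flat pieces have explicitly computable area. The constant $3\pi/2$ comes out of that computation; see~\cite[Lemma~8.4]{Gol-Diss}.

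The gap in your argument is exactly the one you flag, and it is not merely a matter of bookkeeping. Combining the cone bound $g(r)\le\tfrac{r}{2}m(r)$, the sharp isoperimetric bound $g(r)\le m(r)^2/(4\pi)$, the slicing estimate $f\le g'$, and $h(r)\le 6r$, the strongest differential inequality you can extract in the regime $m(r)>2\pi r$ is
\[
g'(r)\;\ge\;\frac{2g(r)}{r}-6r,\qquad\text{equivalently}\qquad \bigl(g(r)/r^2\bigr)'\;\ge\;-\frac{6}{r}.
\]
Integrating from $r$ up to any reference radius $R$ yields only $g(r)/r^2\le g(R)/R^2+6\ln(R/r)$, which is unbounded as $r\to 0$. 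The global mass bound $\M(T)\le s^2/2$ controls $g(R)/R^2$ only for $R$ comparable to the diameter $s$, and nothing in your inequalities prevents the regime $m(r)>3\pi r$ from persisting on an interval $(0,R)$ with $R/r$ arbitrarily large. Hence no uniform constant---not $3\pi/2$, nor any other---can be extracted from this system of inequalities alone. The missing ingredient is a mechanism for removing $\spt(\d T)$ from the picture so that monotonicity applies at every scale, which is precisely what the Reshetnyak gluing supplies.
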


\begin{proof}
The result for $1$-simplices is clear. For $2$-simplices, this uses an extension trick from~\cite[Sect.~3.1, Theorem~81]{Sta} based on
Reshetnyak's gluing theorem for $\CAT(0)$ spaces~\cite[p.~347]{BriH}.
First, extend $X$ to a space $X'$ by attaching a half-line to each vertex of the simplex. 
Each edge of the simplex together with the two adjacent rays forms a geodesic line in $X'$. 
Now extend $X'$ further by gluing a Euclidean half-plane along each of these three lines, identifying the line with the boundary of the half-plane. 
By the gluing theorem, this yields a $\CAT(0)$ space $X''$ containing $X$ isometrically. 
It can be shown that the minimizing simplex together with the three added half-planes, properly oriented, forms a (locally) minimizing locally integral $2$-current $V$ in $X''$. 
Since $V$ has asymptotic density $3\pi/2$, for any base point $x \in X$, the result follows from the monotonicity formula, Corollary~\ref{Cor:monotonicity}. 
See~\cite[Lemma~8.4]{Gol-Diss} for more details.
\end{proof}

In contrast to Lemma~\ref{Lem:min-triangles}, \cite[Example~8.6]{Gol-Diss}
shows that the filling of a geodesic triangle given by the geodesic
cone from some vertex over the opposite side may fail to have uniformly 
bounded density.

We now turn to the assumption involving the rank. 
For a $\CAT(0)$ space~$X$, the {\em asymptotic rank\/} is defined as the supremum of all 
$m \ge 0$ for which there exist a sequence $0 < r_i \to \infty$ and subsets $S_i \sub X$ such that the
rescaled sets $(S_i,\frac{1}{r_i}d)$ converge in the Gromov--Hausdorff topology to the unit ball $B^m \sub \R^m$. As mentioned earlier, if $X$ is proper and cocompact, this agrees with the Euclidean rank; compare~\cite[Theorems~C and~7.1]{Kle} and \cite[Theorem~3.4]{Wen-AR}. 
The fact that the following results build on Lemma~\ref{Lem:min-triangles}
is the only reason for the restriction to asymptotic rank two in this paper.

\begin{Pro} \label{Pro:min-simplices}
Let $X$ be a $\CAT(0)$ space with asymptotic rank $\nu \in \{1,2\}$.
Then for all\/ $k > \nu$ and\/ $\theta > 0$ there exists a constant
$\rho \ge 0$ such that every minimizing $k$-simplex $P \in \bI_{k,\cs}(X)$ has
$(\theta,\rho)$-controlled density.
\end{Pro}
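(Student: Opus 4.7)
I would prove the proposition by induction on $k \ge \nu + 1$ combined with a rescaling and ultralimit argument. The base case $k = \nu + 1$ takes Lemma~\ref{Lem:min-triangles} as input: the $(k-1)$-dimensional faces of a minimizing $(\nu+1)$-simplex are minimizing $\nu$-simplices with $(\theta_0,0)$-controlled density, where $\theta_0 \in \{2,\,3\pi/2\}$. In the inductive step, the hypothesis applied to the $(k-1)$-faces provides $(\theta',\rho')$-controlled density for any $\theta' > 0$ prescribed in advance. Either way, the boundary $\d P$ of every minimizing $k$-simplex $P$ under consideration has $((k+1)\theta',\rho')$-controlled density with $\theta'$ as small as desired (at the cost of a possibly large~$\rho'$).

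Suppose the conclusion fails for some $\theta > 0$, so that there exist minimizing $k$-simplices $P_n$, points $y_n \in X$, and radii $r_n \to \infty$ with $\|P_n\|(B(y_n,r_n)) > \theta\,r_n^k$. Rescaling the metric by the factor $1/r_n$ and recentering at $y_n$ preserves minimality and yields a sequence of minimizing $k$-currents $\tilde P_n$ in $\tilde X_n := (X, d/r_n, y_n)$ with $\|\tilde P_n\|(\ol B(y_n,1)) \ge \theta$, whose boundaries satisfy $\|\d \tilde P_n\|(B(\cdot,R)) \le (k+1)\theta'\, R^{k-1}$ for every $R \ge \rho'/r_n$. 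Passing to an ultralimit produces an asymptotic cone $X_\omega$ of $X$, and, after the usual local truncation together with the lower-semicontinuity of mass under ultralimits of integral currents, a minimizing $k$-current $P_\omega$ in $X_\omega$ satisfying $\|P_\omega\|(\ol B(y_\omega,1)) \ge \theta$ and $\|\d P_\omega\|(B(\cdot,R)) \le (k+1)\theta'\, R^{k-1}$ for every $R > 0$. Corollary~\ref{Cor:monotonicity} then forces lower $k$-density at least $\om_k$ at every point of $\spt(P_\omega) \sm \spt(\d P_\omega)$, so $P_\omega$ is a genuinely $k$-dimensional minimizer with arbitrarily small boundary density constant.

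The contradiction comes from the structural results of~\cite{KleL, GolL, Gol-Diss} on minimizing currents in $\CAT(0)$ spaces of bounded asymptotic rank: for $\theta'$ chosen small enough at the beginning of the induction step, the existence of such a $P_\omega$ would force an isometrically embedded $k$-flat in $X_\omega$ (or in a further ultralimit of it, which is again an asymptotic cone of $X$), violating the bound $\nu < k$ on the asymptotic rank. The main technical obstacle is precisely this last step, in which one extracts a flat of dimension $k$ from a minimizing current whose boundary density constant can be made arbitrarily close to zero. This ultimately rests on the extension trick of~\cite{Sta} used in Lemma~\ref{Lem:min-triangles} and the sub-Euclidean filling arguments of~\cite{KleL, GolL}, which are currently available only through dimension~$2$; this is the only place where the hypothesis $\nu \le 2$ enters.
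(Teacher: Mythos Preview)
Your inductive setup and the first paragraph agree with the paper. From that point the arguments diverge: the paper does not argue by contradiction or pass to asymptotic cones. Instead it invokes \cite[Proposition~5.2]{GolL}, which shows directly---via slicing and the sub-Euclidean isoperimetric inequality \cite[Theorem~1.2]{Wen-AR}, valid in any $\CAT(0)$ space of asymptotic rank at most $k-1$---that a minimizing $k$-current whose boundary has $(\theta',\rho')$-controlled density has $(\theta,\rho)$-controlled density for any prescribed $\theta>0$ and some $\rho=\rho(X,k,\theta',\rho',\theta)$. No smallness of $\theta'$ is required, and the argument works in $X$ itself.

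Your proposal has a genuine gap at the base case $k=\nu+1$. There the facets are minimizing $\nu$-simplices with the \emph{fixed} density constant $\theta_0\in\{2,\,3\pi/2\}$ from Lemma~\ref{Lem:min-triangles}; rescaling does not decrease it (the exponent matches the dimension), so your ``either way \dots\ $\theta'$ as small as desired'' is simply false where the induction starts. Since your contradiction relies on choosing $\theta'$ small enough to force a $k$-flat in the ultralimit, the argument does not get off the ground. Independently, passing minimizing integral currents through an ultralimit in a possibly non-proper $\CAT(0)$ space requires uniform local mass bounds on the $\tilde P_n$ (you only have a \emph{lower} bound on the unit ball) and a justification that minimality survives; neither is supplied, and ``the usual local truncation'' does not cover it. Finally, you mislocate the restriction $\nu\le 2$: the sub-Euclidean filling inequalities of \cite{Wen-AR,KleL,GolL} hold in every dimension above the rank; the only place $\nu\le 2$ enters is Lemma~\ref{Lem:min-triangles}, providing the uniform density bound for the $\nu$-dimensional faces that starts the induction.
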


\begin{proof}
By Lemma~\ref{Lem:min-triangles} we can assume inductively that every facet of $P$ (of dimension $k - 1 \ge \nu$) has $(\theta',\rho')$-controlled density for some $\theta',\rho' \ge 0$. 
Then $\d P$ has $((k+1)\theta',\rho')$-controlled density, and it follows as in~\cite[Proposition~5.2]{GolL} (where $X$ is assumed to be proper) that $P$ has $(\theta,\rho)$-controlled density for some constant $\rho = \rho(X,k,\theta',\rho',\theta)$. 
The argument relies on the sub-Euclidean isoperimetric inequality~\cite[Theorem~1.2]{Wen-AR} and applies verbatim.
\end{proof}

We now have the following analogue of the slim triangle property in Gromov hyperbolic spaces.

\begin{Thm} \label{Thm:slim-simplices}
Let $X$ be a $\CAT(0)$ space with asymptotic rank $\nu \in \{1,2\}$,
and let $k > \nu$. Then there exists a constant $D_k \ge 0$ such that the
following holds for every minimizing $k$-simplex
$P = \bb{x_0,\ldots,x_k} \in \bI_{k,\cs}(X)$
with facets $P'_i = \bb{x_0,\ldots,x_{i-1},x_{i+1},\ldots,x_k}$:
\begin{enumerate}
\item[\rm (1)]
  $\M(P) \le D_k\,\M(\d P)$;
\item[\rm (2)] 
  $\spt(P)$ is within distance at most $D_k$ from $\spt(\d P)$;
\item[\rm (3)]
  for every $i \in \{0,\ldots,k\}$, $\spt(P'_i)$ lies in the
  closed $D_k$-neighborhood of\/ $\bigcup_{j \ne i} \spt(P'_j)$.
\end{enumerate}
\end{Thm}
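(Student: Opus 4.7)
The plan is to prove (2) first, then deduce (1) from (2), and finally obtain (3) by induction on $k$; the principal input throughout is Proposition~\ref{Pro:min-simplices}.

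For (2), I would apply Proposition~\ref{Pro:min-simplices} with $\theta = \om_k/2$ to produce a constant $\rho \ge 0$ such that $\|P\|(B(x,r)) \le \theta r^k$ for all $x \in X$ and $r > \rho$. Given any $x \in \spt(P) \sm \spt(\d P)$, set $r := d(x, \spt(\d P))$. Since $P$ is minimizing and $B(x,r)$ avoids $\spt(\d P)$, monotonicity (Corollary~\ref{Cor:monotonicity}) yields $\om_k r^k \le \|P\|(B(x,r))$. If we had $r > \rho$, the density bound would force $\om_k r^k \le (\om_k/2) r^k$, a contradiction. Hence $r \le \rho$, which proves (2) with $D_k = \rho$.

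For (1), I would combine (2) with a covering argument. Let $\{y_i\}_{i=1}^N$ be a maximal $\rho$-separated subset of $\spt(\d P)$; by (2), the balls $B(y_i, 2\rho)$ cover $\spt(P)$, and the density bound gives $\M(P) \le 2^k \theta \rho^k N$. To bound $N$ by $\M(\d P)/\rho^{k-1}$, I would apply monotonicity on a facet $P'_{j(i)}$ whose support contains $y_i$, obtaining a lower mass bound $\|P'_{j(i)}\|(B(y_i, \rho/2)) \ge c \rho^{k-1}$ in the disjoint balls $B(y_i, \rho/2)$. Transferring these lower bounds on the individual facets into a lower bound on $\|\d P\|$ requires using the minimality of $P$ to preclude substantial cancellation among the signed summands of $\d P = \sum (-1)^j P'_j$ near each $y_i$; this is the one nontrivial input. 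Summation then gives $N \rho^{k-1} \le C \M(\d P)$, yielding (1).

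For (3), I would argue by induction on $k \ge \nu + 1$. When $k - 1 > \nu$, each facet $P'_i$ is a minimizing $(k-1)$-simplex satisfying (2) by the inductive hypothesis, so $\spt(P'_i)$ lies within $D_{k-1}$ of $\spt(\d P'_i)$. Because every $(k-2)$-subfacet of $P'_i$ coincides, by the recursive definition of minimizing simplices, with a subfacet of some $P'_l$ with $l \ne i$, we have $\spt(\d P'_i) \sub \bigcup_{l \ne i} \spt(P'_l)$, and (3) follows with $D_k = D_{k-1}$. In the base case $k = \nu + 1$, the facets are $\nu$-dimensional and Proposition~\ref{Pro:min-simplices} supplies no density bound on them; I would argue by contradiction, showing that a point of $\spt(P'_i)$ at large distance from $\bigcup_{j \ne i} \spt(P'_j)$ would permit a modification of $P'_i$, and hence of $P$, that strictly lowers the mass, contradicting minimality.

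The two steps I expect to require the most care are the cancellation estimate for $\|\d P\|$ in (1) and the base case $k = \nu + 1$ of (3). Both demand a genuinely global use of the minimality of $P$, rather than the clean local density/monotonicity comparison that suffices for (2).
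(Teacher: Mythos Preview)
Your argument for (2) is correct; it is essentially the proof of the implication (AR$_{k-1}$)$\Rightarrow$(FR$_{k-1}$) that the paper cites from~\cite{GolL}. The gaps are in (1) and (3), and they are not merely matters of care.

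For (1), the mechanism you propose cannot work: the minimality of $P$ says nothing about cancellation in $\partial P$, because $\partial P = \sum_j(-1)^jP'_j$ is completely determined by the facets \emph{before} the minimizing filling $P$ is chosen. No property of $P$ feeds back into the $P'_j$ or their signed sum. What you would actually need is a uniform lower density bound for the cycle $\partial P$ itself, and there is no monotonicity available for $\partial P$ (it is not minimizing). Even the facet-level monotonicity you invoke fails whenever $y_i$ lies within $\rho/2$ of $\spt(\partial P'_{j(i)})$. The paper does not attempt any packing argument: it observes that $\partial P$ has uniformly controlled \emph{upper} density (via Lemma~\ref{Lem:min-triangles} and Proposition~\ref{Pro:min-simplices} applied to the facets) and then invokes the linear isoperimetric inequality (AR$_{k-1}$)$\Rightarrow$(LII$_{k-1}$) from~\cite{GolL}, which is a substantive theorem proved by a different route.

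For (3), your inductive step is fine, but the base case $k=\nu+1$ breaks down. You cannot modify $P'_i$ to lower its mass, since $P'_i$ is by definition a minimizing filling of its own boundary; and modifying $P'_i$ would change $\partial P$, so this is not even a legitimate variation of $P$. The paper instead derives (3) from the Morse-lemma implication (AR$_{k-1}$)$\Rightarrow$(ML$_{k-1}$) of~\cite{KleL,GolL}, again applied to the controlled-density cycle $\partial P$.

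In summary, the paper's proof is a direct appeal to the structure theorems of~\cite{GolL,KleL} for cycles with controlled upper density, the sole new input being that $\partial P$ has such control. Your attempt to reprove those implications inline succeeds for (FR) but not for (LII) or (ML).
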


\begin{proof}
By Lemma~\ref{Lem:min-triangles} and Proposition~\ref{Pro:min-simplices},
$\d P$ is a $(k-1)$-cycle with uniformly controlled density.
Hence, assertions~(1), (2), (3) follow from 
the implications (AR$_{k-1}$) $\Rightarrow$ (LII$_{k-1}$), (FR$_{k-1}$),
(ML$_{k-1}$) in~\cite[Theorem~1.1]{GolL}, the last of which was shown
earlier in~\cite[Theorem~5.1]{KleL}. The second implication holds
for every minimizing filling of the cycle by~\cite[Theorem~7.3]{GolL}.
In these references, $X$ is assumed to be a proper metric space satisfying
coning inequalities, mainly to guarantee the existence of minimizing fillings.
By Theorem~\ref{Thm:min-fill} and Proposition~\ref{Pro:coning}, the arguments 
apply to general $\CAT(0)$ spaces.
\end{proof}

As a consequence of inequality~(1), the mass of a minimizing $k$-simplex
with $k \ge \nu \in \{1,2\}$ behaves like a $\nu$-dimensional quantity:

\begin{Cor} \label{Cor:simplex-mass}
Let $X$ be a $\CAT(0)$ space with asymptotic rank $\nu \in \{1,2\}$,
and let $k \ge \nu$. Then there exists a constant $C_k$ such that every minimizing
$k$-simplex $P = \bb{x_0,\ldots,x_k}$ in $X$ with $x_1,\ldots,x_k \in B(x_0,r)$
satisfies $\M(P) \le C_k \,r^\nu$; furthermore, if $\nu = 2$ and
$\diam\{x_1,\ldots,x_k\} \le s$, then $\M(P) \le C_k \,r s$.
\end{Cor}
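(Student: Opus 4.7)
My plan is to induct on $k$, exploiting the linear isoperimetric inequality $\M(P) \le D_k\,\M(\d P)$ from Theorem~\ref{Thm:slim-simplices}(1), valid whenever $k > \nu$. Throughout, denote the minimizing facets of $P$ by $P'_i = \bb{x_0,\ldots,\hat{x}_i,\ldots,x_k}$ for $i = 0,\ldots,k$.

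For the first claim I handle the base case $k = \nu$ directly: if $\nu = 1$, then $\M(\bb{x_0,x_1}) = d(x_0,x_1) \le r$; if $\nu = 2$, then $\diam\{x_0,x_1,x_2\} \le 2r$ and Lemma~\ref{Lem:s-simplex} yields $\M(P) \le 4c_2\,r^2$. For the inductive step $k > \nu$, the facets $P'_i$ with $i \ge 1$ contain $x_0$ and have their remaining vertices in $B(x_0,r)$, so by induction $\M(P'_i) \le C_{k-1}\,r^\nu$; meanwhile $P'_0$ has all vertices in $B(x_1,2r)$, giving $\M(P'_0) \le C_{k-1}\,(2r)^\nu$. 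Summing bounds $\M(\d P) \le (k+2^\nu)\,C_{k-1}\,r^\nu$, and Theorem~\ref{Thm:slim-simplices}(1) closes the induction.

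The second claim (case $\nu = 2$) demands a sharper base case. A direct application of Proposition~\ref{Pro:coning} to the full triangle boundary would only produce an $r(2r+s)$ bound, which is insufficient when $s \ll r$. Instead I would construct a parametric cone over the short edge: let $\gam\colon[0,1]\to X$ parametrize the geodesic from $x_1$ to $x_2$, and let $\Phi\colon[0,1]^2\to X$ send $(u,v)$ to the point at parameter $u$ on the geodesic from $x_0$ to $\gam(v)$. By $\CAT(0)$ comparison, $|\partial_u\Phi(u,v)| = d(x_0,\gam(v)) \le r$ (using convexity of $d(x_0,\cdot)$ along $\gam$) and $|\partial_v\Phi(u,v)| \le u\,d(x_1,x_2) \le u\,s$, so the Jacobian of $\Phi$ is at most $u\,r\,s$ almost everywhere. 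Consequently $\Phi_\#\bb{[0,1]^2}\in\bI_{2,\cs}(X)$ has mass at most $rs/2$, and its boundary $\Phi_\#\bb{\d[0,1]^2}$ traces out $\d P$ (the edge $u = 0$ collapses to $x_0$, the other three map to the sides of the triangle). Minimality of $P$ then gives $\M(P) \le rs/2$.

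For the inductive step $k > 2$ of the second claim, the facet $P'_0 = \bb{x_1,\ldots,x_k}$ has all vertices in $B(x_1,s)$, so the already-proven first claim (with $x_1$ as center and $s$ as radius) yields $\M(P'_0) \le C_{k-1}\,s^2$; each remaining facet $P'_i$ has $\{x_j : j \ne 0,i\}$ of diameter still at most $s$, so $\M(P'_i) \le C_{k-1}\,r\,s$ by induction. Since $s \le d(x_0,x_1)+d(x_0,x_2) \le 2r$, one has $s^2 \le 2rs$, and Theorem~\ref{Thm:slim-simplices}(1) delivers the bound. The hardest step of the whole argument is the cone construction in the base case $k = 2$ of the second claim: the standard coning inequality is not tight enough, and one must exploit $\CAT(0)$ geodesic convexity to factor the Jacobian of the explicit parametrization as $r \cdot s$ rather than as $r \cdot \text{(perimeter)}$.
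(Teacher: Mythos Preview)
Your argument is correct and follows the same inductive scheme as the paper: bound $\M(\d P)$ via the facets and then apply Theorem~\ref{Thm:slim-simplices}(1). For the base case $k=2$ of the second assertion the paper writes simply ``By Proposition~\ref{Pro:coning}, \dots\ $\M(P')\le rs/2$,'' meaning the geodesic cone from $x_0$ over the single edge $\bb{x_1,x_2}$ (the construction behind Proposition~\ref{Pro:coning}, applied to a $1$-current rather than a cycle); your explicit parametrization $\Phi$ is exactly that cone, so what you flag as the hardest step is in fact the same one-line appeal, just unpacked. One cosmetic point: in the inductive step your justification $s \le d(x_0,x_1)+d(x_0,x_2)$ should read $s \le 2r$ via the triangle inequality through $x_0$ for whichever pair $x_i,x_j$ realizes the diameter.
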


\begin{proof}
This follows by induction on $k$. We just prove the last assertion.
By Proposition~\ref{Pro:coning}, every minimizing
triangle $P' = \bb{x_0',x'_1,x'_2}$ in $X$ with
$x'_1,x'_2 \in B(x'_0,r)$ and $d(x'_1,x'_2) \le s$ has
mass $\M(P' ) \le rs/2$. Suppose now that $k > \nu = 2$ and the inequality 
$\M(P') \le C_{k-1} \,rs$ holds for all minimizing $(k-1)$-simplices
$P' = \bb{x'_0,\ldots,x'_{k-1}}$ in $X$ with $x'_1,\ldots,x'_{k-1} \in B(x'_0,r)$
and $\diam\{x'_1,\ldots,x'_{k-1}\} \le s$.
Then, for $P$ as in the statement, we have
\[
  \M(\d P) \le C_{k-1}(k \cdot rs + 2rs), 
\]
and it follows from the first assertion in
Theorem~\ref{Thm:slim-simplices} that $\M(P) \le C_k\,rs$ for
$C_k := (k+2)\,C_{k-1}D_k$.
\end{proof}

By a {\em piecewise minimizing}\/ current $P \in \bI_{k,\cs}(X)$ we mean
an integral linear combination
\[
  P = \sum_{i=1}^N m_i P_i, \quad 0 < m_1,\ldots,m_N \in \Z,
\]
of some minimizing $k$-simplices $P_1,\ldots,P_N$ (in particular, the boundary of a minimizing
simplex is piecewise minimizing in this sense).
For such a representation, we write
\[
  |P|_1 := \sum_{i=1}^N m_i
\]
and denote by $\mesh(P)$ the maximal edge length of all simplices $P_i$.
By Lemma~\ref{Lem:s-simplex},
\[
\M(P) \le c_k \mesh(P)^k\,|P|_1.
\] 
The {\em vertex set\/} of $P$ is the union of the vertex sets of the $P_i$.

\begin{Pro} \label{Pro:p-filling}
Let $X$ be a $\CAT(0)$ space with asymptotic rank $\nu \in \{1,2\}$,
and let $k \ge \nu$. Then every piecewise minimizing cycle $P \in \bI_{k,\cs}(X)$
with vertex set $V$ has a piecewise minimizing filling $Q \in \bI_{k+1,\cs}(X)$ with
\[
\M(Q) \le C_{k+1} \diam(V) \mesh(P)^{\nu - 1}\, |P|_1. 
\]
\end{Pro}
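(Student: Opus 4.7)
The plan is to cone $P$ over a fixed vertex $v_0 \in V$ using \emph{minimizing} $(k+1)$-simplices. Fix a representation $P = \sum_{i=1}^{N} m_i P_i$ with $P_i = \bb{x_0^i,\ldots,x_k^i}$, together with a consistent choice of minimizing simplex for every vertex subset of $V$. Setting
\[
Q_i := \bb{v_0, x_0^i, \ldots, x_k^i} \in \bI_{k+1,\cs}(X), \qquad Q := \sum_{i=1}^{N} m_i Q_i,
\]
the current $Q$ is piecewise minimizing by construction, and it remains to verify $\d Q = P$ together with the claimed mass bound.

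For the mass estimate I would apply Corollary~\ref{Cor:simplex-mass} to each $(k+1)$-simplex $Q_i$; since $k \ge \nu$, we have $k+1 > \nu$, so the corollary is available. The non-$v_0$ vertices $x_0^i, \ldots, x_k^i$ belong to $V$ and hence lie in $B(v_0, \diam V)$, while their pairwise distances are at most $\mesh(P_i) \le \mesh(P)$ since they are vertices of a single simplex $P_i$. The corollary then yields $\M(Q_i) \le C_{k+1}\,\diam(V)$ when $\nu = 1$ and the sharper bound $\M(Q_i) \le C_{k+1}\,\diam(V)\,\mesh(P)$ when $\nu = 2$; both read $\M(Q_i) \le C_{k+1}\,\diam(V)\,\mesh(P)^{\nu-1}$, and summing with multiplicities $m_i$ gives the claimed $\M(Q) \le C_{k+1}\,\diam(V)\,\mesh(P)^{\nu-1}\,|P|_1$ (absorbing constants).

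The main obstacle is the boundary identity $\d Q = P$. By the definition of the minimizing $(k+1)$-simplex,
\[
\d Q_i = P_i - \sum_{j=0}^{k}(-1)^j \bb{v_0, x_0^i, \ldots, \widehat{x_j^i}, \ldots, x_k^i},
\]
and with the consistent choice of minimizing simplices the trailing sum is precisely the formal $v_0$-cone of $\d P_i$. Summing with weights $m_i$ yields $\d Q = P - \mathcal{C}_{v_0}(\d P)$, where $\mathcal{C}_{v_0}$ denotes the linear operation of coning minimizing $(k-1)$-simplices over $v_0$. Under the natural reading of ``piecewise minimizing cycle'' as the image under $\bb{\cdot}$ of a formal simplicial cycle---the reading in which the recursive definition of $\bb{x_0,\ldots,x_k}$ is unambiguous---the formal chain $\sum_i m_i\,\d P_i$ vanishes at the simplicial level, so $\mathcal{C}_{v_0}(\d P) = 0$ and $\d Q = P$. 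Any residual discrepancy at the formal level would produce an extra $k$-cycle supported near $\{v_0\} \cup \spt(\d P)$ of mass controlled by the same estimate, which could be absorbed into $Q$ by an inductive cone construction in one lower dimension.
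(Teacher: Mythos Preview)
Your argument is correct and matches the paper's proof essentially line for line: fix a vertex in $V$, cone each minimizing $k$-simplex over it to a minimizing $(k+1)$-simplex, apply Corollary~\ref{Cor:simplex-mass} with $r=\diam(V)$ and $s=\mesh(P)$ to bound each $\M(Q_i)$, and sum. The paper compresses your boundary discussion into the phrase ``in a compatible way, such that $Q := \sum_i m_i Q_i$ is a filling of $P$''; your more explicit treatment of the cancellation of the $v_0$-cone of $\d P$ is a welcome elaboration of exactly that point.
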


\begin{proof}
Let $P = \sum_{i=1}^N m_i P_i$, and let $z \in V$ be a fixed vertex. We consider the set of all minimizing subsimplices of $P$ of any dimension $j \in \{0,\ldots,k\}$, that is, those arising in the inductive construction of the $k$-simplices $P_i$. 
We assign to each such $j$-simplex $P'$ a minimizing $(j+1)$-simplex $Q(P')$ with the additional vertex $z$, and to $-P'$ the oppositely oriented simplex $-Q(P')$, by the following inductive procedure.
For $j = 0$ and $x \in V$, we take the geodesic segment from $z$ to $x$ and put $Q(\bb{x}) := \bb{z,x}$. 
For $j = 1,\ldots,k$, if $\d P' = P_0'' + \ldots + P_j''$, then $P' - Q(P_0'') - \ldots - Q(P_j'')$ is a cycle and we let $Q(P')$ be a minimizing filling.
Finally, we put $Q_i := Q(P_i)$. As $P$ is closed,
$Q := \sum_{i=1}^N m_i Q_i$ is a filling of $P$.
By Corollary~\ref{Cor:simplex-mass}, 
\[
\M(Q_i) \le C_{k+1} \diam(V) \mesh(P)^{\nu - 1},
\]
and since $|Q|_1 = |P|_1$, the result follows. 
\end{proof}


\section{Piecewise minimizing approximation} \label{sect:approx}

Our next goal is to prove Theorem~\ref{Thm:approx} below.
The results in this section are independent of the asymptotic rank.

We start with the following result for displacements of cycles. 
Here and subsequently, we will simply write $\diam(R)$ rather than $\diam(\spt(R))$ 
for the diameter of (the support of) a current $R \in \bI_{k,\cs}(X)$. 

\begin{Pro} \label{Pro:homotopy}
Let $X$ be a $\CAT(0)$ space.
Suppose that $T \in \bI_{k,\cs}(X)$ is a cycle of dimension $k \ge 1$,
and $(B_i)_{i=1}^J$ is a finite $cs$-bounded covering of $\spt(T)$
with $s$-multiplicity at most $n+1$, for some $n,c,s > 0$.
Suppose further that $f \colon \spt(T) \to X$ is an
$L$-Lipschitz map such that $d(x,f(x)) \le Ls$ for all $x \in \spt(T)$.
Then, for some constant $A$ depending only on $k,n,c$ and $L$,
there exist cycles $R_1,\ldots,R_J \in \bI_{k,\cs}(X)$
with $\diam(R_i) \le A s$ such that 
\[
  T - f_\#T = \sum_{i=1}^J R_i \quad \text{and} \quad 
  \sum_{i=1}^J \M(R_i) \le A\,\M(T).
\]
\end{Pro}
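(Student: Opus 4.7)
The plan is to interpolate explicitly between the identity and $f$ via the geodesic homotopy in $X$, split the interpolation progressively across the cover using a partition of unity, and define the $R_i$ as consecutive telescoping pieces. Let $H \colon \spt(T) \times [0,1] \to X$ be the geodesic homotopy $H(x,t) := \gam_x(t)$, where $\gam_x$ is the constant-speed geodesic from $x$ to $f(x)$; by $\CAT(0)$-convexity of the distance function, $H$ is Lipschitz with constants controlled by $L$ (and $Ls$ in the time variable). Borrowing the construction of Lemma~\ref{Lem:complex}, set $\tau_i(x) := \max\{s - 2\,d(x,B_i), 0\}$, $\bar\tau := \sum_i \tau_i$, and $\lam_i := \tau_i/\bar\tau$, which defines a Lipschitz partition of unity on $\spt(T)$. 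The key observation, already exploited in the proof of that lemma, is that at each point $x$ at most $n+1$ of the $\tau_i(x)$ are non-zero, because $B(x,s/2)$ has diameter at most $s$ and so meets at most $n+1$ members of $(B_i)$. Consequently $\bar\tau \ge s$ on $\spt(T)$ and $\Lip(\bar\tau) = O(n)$, which is what will keep all subsequent Lipschitz constants under control.

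Form the cumulative functions $t_i := \sum_{j \le i} \lam_j$, so that $t_0 \equiv 0$, $t_J \equiv 1$, and $t_i - t_{i-1} = \lam_i$ vanishes outside $U_i := \{x \in \spt(T) : d(x,B_i) < s/2\}$. Define intermediate maps $W_i \colon \spt(T) \to X$ by $W_i(x) := H(x, t_i(x))$ and put
\[
R_i := (W_{i-1})_\#T - (W_i)_\#T.
\]
Since $\d T = 0$ and pushforward commutes with $\d$, each $R_i$ is a cycle, and telescoping together with $W_0 = \id$ and $W_J = f$ gives $\sum_i R_i = T - f_\#T$. Because $W_{i-1} \equiv W_i$ outside $U_i$, one has $R_i = (W_{i-1})_\#(T \on U_i) - (W_i)_\#(T \on U_i)$. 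For $x \in U_i$, both $W_{i-1}(x)$ and $W_i(x)$ lie within distance $Ls$ of $x$, which itself is within $s/2$ of $B_i$; hence $\spt(R_i)$ is contained in a set of diameter at most $(c + 2L + 1)s$.

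For the mass, $\M(R_i) \le 2L_1^k\,\|T\|(U_i)$, where $L_1$ is a Lipschitz constant for $W_i$, and summing,
\[
\sum_i \M(R_i) \le 2L_1^k \int_{\spt(T)} \#\{i : x \in U_i\}\,d\|T\|(x) \le 2L_1^k(n+1)\,\M(T),
\]
again by the multiplicity bound. The one point that demands care, and is the main obstacle, is to bound $L_1$ \emph{uniformly in $i$ and $J$}: the quotient rule applied to $\lam_i = \tau_i/\bar\tau$, together with the multiplicity control on $\bar\tau$, gives $\Lip(\lam_i) = O(n/s)$, and the same multiplicity argument shows that the telescoping sum $t_i$ remains $O(n/s)$-Lipschitz rather than growing with $i$. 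Combined with the Lipschitz bounds on $H$, this yields $L_1 = O(L(n+1))$, and hence an admissible constant $A = A(k,n,c,L)$.
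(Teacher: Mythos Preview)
Your argument is correct and takes a genuinely different route from the paper. The paper decomposes the current $T$ itself: it slices $T$ iteratively along the distance functions $\rho_i = d(\cdot,B_i)$ to write $T = \sum_i T_i$ with each $T_i$ supported in the $s/2$-neighborhood of $B_i$, uses the multiplicity bound to control $\sum_i \M(\d T_i) \le 4(n+1)s^{-1}\M(T)$, and then applies the geodesic homotopy once to each $\d T_i$ to produce cylinders $Z_i$ with $\d Z_i = f_\#(\d T_i) - \d T_i$ and $\M(Z_i) \le kL^k s\,\M(\d T_i)$; finally $R_i := T_i + Z_i - f_\#T_i$, and the cylinders cancel in the sum since $\sum_i \d T_i = 0$. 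You instead leave $T$ intact and interpolate the \emph{map}: the staircase $W_0 = \id, W_1,\ldots,W_J = f$ built from the cumulative partition functions $t_i$ gives $R_i = (W_{i-1})_\#T - (W_i)_\#T$ directly. Both proofs exploit the same combinatorial fact---at most $n+1$ of the $\tau_i$ are nonzero at any point---but at different junctures: the paper uses it to bound the total sliced boundary mass, you use it (correctly) to bound $\Lip(t_i)$ uniformly in $i$ by writing $t_i = a_i/\bar\tau$ with both numerator and denominator $O(n)$-Lipschitz. Your approach is more streamlined and avoids slicing theory altogether; the paper's approach keeps the maps simpler (only $\id$ and $f$ appear) and isolates the homotopy cylinder with its explicit mass bound, which connects more visibly to standard geometric measure theory constructions.
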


\begin{proof}  
We assume that the $B_i$ are non-empty subsets of $\spt(T)$.
Let $\rho_i \colon X \to \R$ be the distance function to $B_i$, and let
\[
  W_i := \{x \in X: \rho_i(x) < s/2\}.
\]
Note that every $x \in X$ belongs to at most $n+1$ members of
$(W_i)_{i=1}^J$. We claim that for $j = 1,\ldots,J$ there exist
$T_j,U_j \in \bI_{k,\cs}(X)$ and decompositions
\[
  T = \sum_{i=1}^j T_i + U_j 
\]
such that $\M(T) = \sum_{i=1}^j \M(T_i) + \M(U_j)$  and
\[
  \sum_{i=1}^j \M(\d T_i) + \M(\d U_j) 
  \le \sum_{i=1}^j 4s^{-1}\|T\|(W_i).
\]
For $j = 1$, by the coarea inequality for slices,
there exists $r_1 \in (0,s/2)$ such that  
$T_1 := T \on \{\rho_1 \le r_1\}$ and $U_1 := T - T_1$
satisfy
\[
\M(\d U_1) = \M(\d T_1) \le 2s^{-1}\|T\|(W_1).
\]
Now let $j \in \{2,\ldots,J\}$. Given the decomposition for $j - 1$,
we split $U_{j-1}$ as $T_j + U_j$ such that
$T_j := U_{j-1} \on \{\rho_j \le r_j\}$ for some $r_j \in (0,s/2)$ and
the slice
\[
  S_j := \d T_j - (\d U_{j-1}) \on \{\rho_j \le r_j\}
  = (\d U_{j-1}) \on \{\rho_j > r_j\} - \d U_j
\]  
satisfies $\M(S_j) \le 2s^{-1} \|T\|(W_j)$. Then
\begin{align*}
  \M(\d T_j) + \M(\d U_j)
  &\le \|\d U_{j-1}\|(\{\rho_j \le r_j\}) + \M(S_j) \\
  &\qquad + \|\d U_{j-1}\|(\{\rho_j > r_j\}) + \M(S_j) \\
  &\le \M(\d U_{j-1}) + 4s^{-1} \|T\|(W_j).
\end{align*}
This yields the claim. Note that $\spt(T_j) \sub \{\rho_j \le r_j\}$ and
$\spt(U_j) \sub \spt(T) \sm \bigcup_{i=1}^j B_i$ for $j = 1,\ldots,J$,
in particular $\diam(T_j) \le (c+1)s$ and, in fact, $U_J = 0$.
Thus, we have decomposed $T$ into pieces $T_1,\dots,T_J$, and 
since $(W_i)_{i=1}^J$ is a covering of $\spt(T)$ with multiplicity at most $n + 1$, 
it follows that
\[
  \sum_{i=1}^J \M(\d T_i) \le \sum_{i=1}^J 4s^{-1}\|T\|(W_i) 
  \le 4(n+1)s^{-1} \,\M(T).
\]
For every $i$, the geodesic homotopy $h \colon [0,1] \times X \to X$
from $\id_X$ to the given map $f$ provides
a `cylinder' $Z_i \in \bI_{k,\cs}(X)$ with
$\d Z_i = f_\#(\d T_i) - \d T_i$ and
\[
  \M(Z_i) \le k L^k s \,\M(\d T_i)
\]
(see~\cite[Sect.~2.3]{Wen-EII}). The cycles $R_i := T_i + Z_i - f_\# T_i$ satisfy
\[
  \sum_{i=1}^J \M(R_i) \le \bigl( 1 + 4(n+1)k L^k + L^k \bigr) \,\M(T)
\]
and $\diam(R_i) \le (c+1+2L)\,s$. Note that $\sum_{i=1}^J Z_i = 0$
since $\sum_{i=1}^J \d T_i = \d T = 0$. This gives the result.
\end{proof}  

We will also use the following basic building block of the
deformation theorem in piecewise Euclidean simplicial complexes,
see~\cite[Appendix~A]{BasWY}.

\begin{Lem} \label{Lem:defo}
Let $\Sig$ be a complete metric space, and suppose that $\om \sub \Sig$ is an open subset isometric to a regular open Euclidean $l$-simplex of edge length $s > 0$. 
Then for every $T \in \bI_{k,\cs}(\Sig)$ with $1 \le k < l$ and $\spt(\d T) \cap \om = \es$ there exists a cycle $Z \in \bI_{k,\cs}(\Sig)$ with support in the closed simplex $\ol\om$ such that $\spt(T - Z) \cap \om = \es$ and $\M(Z) \le K\,\|T\|(\om)$ for some constant $K$ depending only on $n$.
\end{Lem}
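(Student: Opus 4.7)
The plan is to produce $Z$ by modifying $T_0 := T \on \om$ on the boundary $\d\sig$ so as to cancel its boundary. The two key ingredients are a slicing argument showing $\d T_0$ is concentrated on $\d\sig$, and a Federer--Fleming radial projection combined with a Fubini estimate to fill $\d T_0$ efficiently.

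First, I would apply the slicing formula with the $1$-Lipschitz function $\rho(x) := d(x, \Sig \sm \om)$, so that $\{\rho > 0\} = \om$. For every $r > 0$ the set $\{\rho > r\}$ is contained in $\om$, and since $\spt(\d T) \cap \om = \es$, the slicing identity reduces to $\d(T \on \{\rho > r\}) = -S_r$, with $\int_0^s \M(S_r)\,dr \le \|T\|(\om)$. Choosing a sequence $r_j \downarrow 0$ with $\M(S_{r_j})$ uniformly bounded, the Ambrosio--Kirchheim compactness theorem yields $T \on \{\rho > r_j\} \to T_0$ weakly, identifying $T_0$ as an integer rectifiable current with $\d T_0$ of finite mass and, since $\spt(S_{r_j}) \sub \{\rho = r_j\}$, with support contained in $\sig \cap (\Sig \sm \om) = \d\sig$.

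Next, I would seek $Z = T_0 + W$ with $W$ supported in $\d\sig$, $\d W = -\d T_0$, and $\M(W) \le K_1\,\|T\|(\om)$, which forces $\M(Z) \le (1+K_1)\,\|T\|(\om)$ since the mass measures of $T_0$ and $W$ sit on disjoint sets ($\om$ and $\d\sig$ respectively). The natural candidate is $W := -(\pi_p)_\# T_0$, where $\pi_p \colon \sig \sm \{p\} \to \d\sig$ is radial projection from a center $p$ chosen in a ball $B_0 \sub \om$ of radius of order $s$. Since $\pi_p$ restricts to the identity on $\d\sig$, formally $\d W = -(\pi_p)_\# \d T_0 = -\d T_0$. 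The $k$-dimensional Jacobian of $\pi_p$ at $x \in \sig$ is bounded by $C(n)\,s^k d(x,p)^{-k}$, and since $k < n$, Fubini's theorem gives
\[
  \int_{B_0} \M((\pi_p)_\# T_0)\,dp
  \le C(n)\,s^k \int_\sig \int_{B_0} d(x,p)^{-k}\,dp\,d\|T\|(x)
  \le C'(n)\,|B_0|\,\|T\|(\om),
\]
so some $p \in B_0$ satisfies $\M(W) \le K_1(n)\,\|T\|(\om)$.

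The main obstacle is giving a rigorous meaning to $(\pi_p)_\# T_0$ when $p$ may lie in $\spt(T_0)$, as $\pi_p$ is not globally Lipschitz. I would handle this by approximation: for $\eps > 0$, $\pi_p$ is Lipschitz on the support of $T_0^\eps := T_0 \on \{d(\cdot,p) \ge \eps\}$, and $(\pi_p)_\# T_0^\eps$ is a well-defined integer rectifiable current whose mass is controlled uniformly by the Fubini estimate above; slicing in $d(\cdot,p)$ produces a sequence $\eps_j \downarrow 0$ along which the boundary masses stay bounded. Ambrosio--Kirchheim compactness on the compact set $\d\sig$ then extracts a weak limit $W$, and the fact that $\pi_p$ is the identity on $\d\sig$ together with $\spt(\d T_0) \sub \d\sig$ yields $\d W = -\d T_0$. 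Setting $Z := T_0 + W$ and $K := 1 + K_1$ completes the proof.
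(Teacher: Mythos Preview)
Your approach is essentially the paper's: radial projection from a well-chosen center $p \in \om$, with the Fubini estimate for mass control. Extending $\pi_p$ by the identity outside $\om$ to a map $\tilde\pi_p \colon \Sig \sm \{p\} \to \Sig \sm \om$, your $Z = T_0 - (\pi_p)_\# T_0$ is precisely the paper's $Z = T - (\tilde\pi_p)_\# T$.

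There is, however, a genuine gap in your first step. From $\int_0^s \M(S_r)\,dr \le \|T\|(\om)$ you \emph{cannot} extract $r_j \downarrow 0$ with $\M(S_{r_j})$ uniformly bounded: an integrable function such as $r \mapsto (r\log^2(1/r))^{-1}$ tends to infinity at $0$. In fact the conclusion you are after is false in general: $T_0 = T \on \om$ need not be an integral current. A rectifiable curve in $\R^2$ of finite length that crosses $\d\sig$ infinitely often, with crossings accumulating at a boundary point, yields an integral $1$-current $T$ with $\spt(\d T) \cap \om = \es$ but $\M(\d(T \on \om)) = \infty$. Since your final paragraph relies on $\M(\d T_0) < \infty$ to bound the boundary of $(\pi_p)_\# T_0^\eps$, the gap propagates.

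The fix is to skip $T_0$ and project $T$ directly, replacing compactness by convergence in mass. Let $\pi_p^\eps \colon \Sig \to \Sig$ be Lipschitz and agree with $\tilde\pi_p$ outside $B(p,\eps)$. Then $(\pi_p^\eps)_\# T$ is an integral current with $\d(\pi_p^\eps)_\# T = \d T$ once $\eps < d(p,\spt(\d T))$, since $\tilde\pi_p$ fixes $\spt(\d T) \sub \Sig \sm \om$. For the good $p$ provided by Fubini one has $\int_\om d(x,p)^{-k}\,d\|T\|(x) < \infty$, which forces $\|T\|(B(p,\eps)) = o(\eps^k)$ and makes $\{(\pi_p^\eps)_\# T\}_\eps$ Cauchy in mass as $\eps \downarrow 0$. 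The limit $T'$ is an integral current with $\d T' = \d T$ and $\spt(T') \cap \om = \es$, and $Z := T - T'$ has the required properties.
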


\begin{proof}
This is shown as in the proof of~\cite[Proposition~A.4]{BasWY}.
The radial projection $\Sig \sm \{z\} \to \Sig \sm \om$ from a suitably chosen center $z \in \om$ maps $T$ to a current $T' \in \bI_{k,\cs}(\Sig)$ such that $Z := T - T'$ is the desired cycle. 
\end{proof}

We now have the following approximation result.

\begin{Thm} \label{Thm:approx}
Let $X$ be a $\CAT(0)$ space.
Suppose that $T \in \bI_{k,\cs}(X)$ is a cycle of dimension $k \ge 1$ whose
support has a finite $cs$-bounded covering with $s$-multiplicity at most $n+1$, for some $n,c,s > 0$. 
Then, for some constant $B > 0$ depending only on $k,n,c$,
there exist a piecewise minimizing cycle $P \in \bI_{k,\cs}(X)$ with vertices
in $\spt(T)$ and\/ $\mesh(P) \le Bs$, and a finite collection of
cycles $Z_1,\ldots,Z_N \in \bI_{k,\cs}(X)$ with $\diam(Z_i) \le Bs$
such that
\[
T - P = \sum_{i=1}^N Z_i, \quad 
|P|_1 \le B s^{-k}\,\M(T), \quad \text{and} \quad
\sum_{i=1}^N \M(Z_i) \le B\,\M(T).
\]
Furthermore, there exists an $S \in \bI_{k+1,\cs}(X)$ with $\d S = T - P$
and $\M(S) \le B s\,\M(T)$.
\end{Thm}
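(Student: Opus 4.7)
The plan is to push $T$ through three successive deformation steps: first by a Lipschitz map into the image of a low-dimensional auxiliary simplicial complex, then by a combinatorial deformation onto its $k$-skeleton, and finally by replacing each affine $k$-simplex with a minimizing one. I apply Lemma~\ref{Lem:complex} to $Y:=\spt(T)$ to obtain a finite simplicial complex $\Sig$ of dimension at most $n$ with edge length $s$, together with Lipschitz maps $\psi\colon Y\to\Sig$ and $\phi\colon\Sig\to X$ with $\phi(\Sig^{(0)})\sub Y$, such that $f:=\phi\circ\psi$ is $L$-Lipschitz with $d(x,f(x))\le Ls$ and $\phi$ is $L$-Lipschitz on every simplex. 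Proposition~\ref{Pro:homotopy} then produces cycles $R_i$ with $T-f_\#T=\sum R_i$, $\diam(R_i)\le As$, and $\sum\M(R_i)\le A\M(T)$; these form one batch of the $Z_i$.

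Next, consider $T':=\psi_\#T\in\bI_{k,\cs}(\Sig)$, of mass at most $L^k\M(T)$, and apply Lemma~\ref{Lem:defo} iteratively to the interiors of the simplices of $\Sig$ in decreasing order of dimension, from the top dimension down to $k+1$. Since each intermediate current remains a cycle, the hypothesis $\spt(\d T)\cap\om=\es$ of the lemma is automatic, and every step peels off a cycle $Z_\sig$ supported in a single simplex of diameter at most $s$. The procedure terminates with a cycle $T_k\in\bI_{k,\cs}(\Sig)$ supported on $\Sig^{(k)}$, and the decomposition $T'-T_k=\sum Z_\sig$ has total mass $\lesssim\M(T')$. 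The pushforwards $\phi_\#Z_\sig$ then have diameter at most $Ls$ and form the second batch of the $Z_i$.

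Since $T_k$ is an integral $k$-current on the $k$-polyhedron $\Sig^{(k)}$, a constancy argument yields $T_k=\sum_\sig m_\sig\bb\sig$ over the $k$-simplices $\sig$ of $\Sig$, with $\sum|m_\sig|\lesssim s^{-k}\M(T_k)\lesssim s^{-k}\M(T)$. For every simplex $\sig=[v_0,\ldots,v_j]$ of $\Sig$ of dimension $j\le k$ I fix once and for all a minimizing $j$-simplex $\til{\bb\sig}:=\bb{\phi(v_0),\ldots,\phi(v_j)}$, the choice being made compatibly across $\Sig^{(k)}$ so that the boundary of each $\til{\bb\sig}$ is the alternating sum of the corresponding choices on its faces. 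Setting $P:=\sum_\sig m_\sig\til{\bb\sig}$, the simplicial identity $\d T_k=0$ transfers to $\d P=0$; and since the vertices of each $\til{\bb\sig}$ are pairwise within distance $Ls$, we obtain $\mesh(P)\le Ls$ and $|P|_1\lesssim s^{-k}\M(T)$.

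To handle the remaining difference $\phi_\#T_k-P$ and simultaneously produce $S$, I construct a \emph{straightening} by induction on $j\le k$: to every simplex $\tau$ of $\Sig$ of dimension $j$ I associate $H_\tau\in\bI_{j+1,\cs}(X)$ with $H_v:=0$ for vertices and, for higher $\tau$,
\[
  \d H_\tau=\phi_\#\bb\tau-\til{\bb\tau}-\sum_i(-1)^i H_{\tau_i}.
\]
By $\d^2=0$ the right-hand side is a cycle, inductively of mass $\lesssim s^j$ and of support diameter $\lesssim Ls$, so Proposition~\ref{Pro:coning} delivers $H_\tau$ with $\M(H_\tau)\lesssim s^{j+1}$. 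Setting $Z_\sig^{(3)}:=\d H_\sig$ for $k$-simplices $\sig$, the identity $\sum_\sig m_\sig Z_\sig^{(3)}=\phi_\#T_k-P$ follows because $\d T_k=0$ annihilates the $H_\tau$-terms across shared $(k-1)$-faces; the $Z_\sig^{(3)}$ have diameter $\lesssim Ls$ and total weighted mass $\lesssim s^k\sum|m_\sig|\lesssim\M(T)$. Finally, $S:=\sum_\sig m_\sig H_\sig$ added to cone fillings (via Proposition~\ref{Pro:coning}) of the first two batches of cycles yields a filling of $T-P$ with $\M(S)\lesssim s\M(T)$. The main obstacle is the combinatorial bookkeeping underlying the compatible choice of the minimizing sub-simplices $\til{\bb\tau}$ and of the straightening chains $H_\tau$; once this is in place, the telescoping identity $\sum_\sig m_\sig\d H_\sig=\phi_\#T_k-P$ is forced by $\d T_k=0$.
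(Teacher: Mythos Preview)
Your argument matches the paper's proof essentially step for step: the factorization through Lemma~\ref{Lem:complex}, the skeletal deformation via Lemma~\ref{Lem:defo}, the compatible replacement by minimizing simplices (your $\tilde{\bb\cdot}$ is the paper's $\Lambda_j$), the chain homotopy (your $H_\tau$ is the paper's $\Gamma_j$), and the appeal to Proposition~\ref{Pro:homotopy} are all identical. The only difference is that you bound $\M(H_\tau)$ via the coning inequality rather than Theorem~\ref{Thm:eucl-isop} as the paper does; this is fine once the minimizing simplices $\tilde{\bb\tau}$ are chosen with support in the closed convex hull of their vertices (possible in $\CAT(0)$ by pushing forward under the $1$-Lipschitz nearest-point projection), a choice that is in any case implicitly needed in both arguments to secure the diameter bound $\diam(Z_i)\le Bs$.
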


It follows from Lemma~\ref{Lem:s-simplex} that $\M(P) \le c_k(Bs)^k\,|P|_1 \le c_k B^{k+1}\,\M(T)$.

\begin{proof}
Choose a simplicial complex $\Sig$ and Lipschitz maps
$\psi \colon \spt(T) \to \Sig$ and $\phi \colon \Sig \to X$ as in
Lemma~\ref{Lem:complex}. Then $T' := \psi_\#T \in \bI_{k,\cs}(\Sig)$ satisfies
\[
  \M(T') \le L^k\,\M(T).
\]
Let $l \le n$ be the maximal dimension for which there exists a relatively
open $l$-simplex of $\Sig$ on which $\|T'\|$ is non-zero.
Writing $T^{\,l} := T'$, and assuming that $l > k$,
we let $\Om^{\,l}$ be the set of all such simplices.
We apply Lemma~\ref{Lem:defo} to the $l$-skeleton $\Sig^{(l)}$ and every $\om \in \Om^{\,l}$
to obtain cycles $Z'_\om \in \bI_{k,\cs}(\Sig)$
with support in $\ol\om$ such that 
$\|T^{\,l} - Z'_\om\|(\om) = 0$ and
\[
  \M(Z'_\om) \le K\,\|T^{\,l}\|(\om).
\]
Then $T^{\,l-1} := T^{\,l} - \sum_{\om \in \Om^{\,l}} Z'_\om$ is an integral
cycle with support in $\Sig^{(l-1)}$ and mass
\[
  \M(T^{\,l-1}) \le (1 + K)\,\M(T^{\,l}).
\]
Next, in case $l - 1 > k$, we let $\Om^{\,l-1}$ be the set of all open
$(l-1)$-simplices of $\Sig$ on which $\|T^{\,l-1}\|$ is non-zero, then we
repeat the above argument with $T^{\,l-1}$ and $\Om^{\,l-1}$ in place
of $T^{\,l}$ and $\Om^{\,l}$. Continuing in this manner, we eventually
get a cycle $P' := T^{\,k} \in \bI_{k,\cs}(X)$ with support in $\Sig^{(k)}$
and a decomposition
\[
  T' = P' + \sum_{\om \in \Om} Z'_\om,
\]
where $\Om = \Om^{k+1} \cup \ldots \cup \Om^l$, every $Z'_\om$
is a cycle with support in $\ol\om$, and
\[
  \M(P') \le K'\,\M(T'), \quad \sum_{\om \in \Om} \M(Z'_\om) \le K'\,\M(T')
\]
for some constant $K'$ depending only on $n$.
As an integral $k$-cycle with support in the $k$-skeleton,
$P'$ is in fact simplicial, $P' \in \cP_k(\Sig)$. That is,
\[
  P' = \sum_{i=1}^F m_i\bb{\sig_i}
\]
for some positive integers $m_i$ and oriented $k$-simplices $\sig_i$ of $\Sig$,
viewed as integral currents $\bb{\sig_i}$. Denoting by 
$\mu_k$ the mass of a Euclidean $k$-simplex of edge length $1$, we get
\[
  |P'|_1 := \sum_{i=1}^F m_i = \frac1{\mu_ks^k}\,\M(P')
  \le \frac{K' L^k}{\mu_k}\,\frac{\M(T)}{s^k} = K'' s^{-k} \,\M(T).
\]
Recall that $\phi \colon \Sig \to X$ is $L$-Lipschitz on
every simplex of $\Sig$. Put $Z_\om := \phi_\#(Z'_\om)$
for every $\om \in \Om$. Then $\diam(Z_\om) \le Ls$ and  
\[
  \sum_{\om \in \Om} \M(Z_\om) \le \sum_{\om \in \Om} L^k \,\M(Z'_\om)
  \le K'L^{2k} \,\M(T).
\]

Now we proceed similarly as in the proof of~\cite[Theorem~1.4]{BasWY}.
We successively choose homomorphisms 
\[
\Lam_j \colon \cP_j(\Sig) \to \bI_{j,\cs}(X), \quad j = 0,\ldots,k, 
\]
such that $\Lam_0(\bb{v}) = \phi_\#(\bb{v}) = \bb{\phi(v)}$ for every 
vertex $v$ of $\Sig$ and, if $\Lam_{j-1}$ is defined, such that $\Lam_j$ maps
every $j$-simplex $\bb{\sig}$ to a minimizing $j$-simplex in $X$ with boundary
\[
\d\,\Lam_j(\bb{\sig}) = \Lam_{j-1}(\d\,\bb{\sig}).
\]
Since $\phi$ is $L$-Lipschitz on simplices, it follows
from Lemma~\ref{Lem:s-simplex} that 
\[
\M(\Lam_j(\bb{\sig})) \le c_j(Ls)^j.
\]
Now $P := \Lam_k(P') \in \bI_{k,\cs}(X)$ is a piecewise minimizing cycle 
with vertices in $\spt(T)$, $\mesh(P) \le Ls$, and 
\[
|P|_1 = |P'|_1 \le K''s^{-k}\,\M(T),
\]
as desired. Next, we define homomorphisms
\[
\Gam_j \colon \cP_j(\Sig) \to \bI_{j+1,\cs}(X), \quad j = 0,\ldots,k,
\]
such that $\Gam_0 = 0$ and, if $\Gam_{j-1}$ is defined, such that $\Gam_j$ 
maps every $j$-simplex $\bb\sig$ to a minimizing integral 
$(j+1)$-current with boundary
\[
\d\,\Gam_j(\bb\sig) 
= \phi_\#(\bb\sig) - \Lam_j(\bb\sig) - \Gam_{j-1}(\d\,\bb\sig).
\]
It follows by induction from Theorem~\ref{Thm:eucl-isop} that 
\[
  \M(\d\,\Gam_j(\bb\sig) \le c'_j(Ls)^j \quad \text{and} \quad
  \M(\Gam_j(\bb\sig)) \le \gam_jc'_j(Ls)^{j+1}
\]
for some constants $c'_j$, as
\[
  \M(\d\,\Gam_j(\bb\sig)) \le L^j \mu_js^j + c_j(Ls)^j
  + (j+1)\,\gam_{j-1}c'_{j-1}(Ls)^j.
\]
Define $Z_i := \d\,\Gam_k(m_i\bb{\sig_i})$ for $i = 1,\ldots,F$. Then
\[
  \sum_{i=1}^F \M(Z_i) \le \sum_{i=1}^F m_i\,c'_k(Ls)^k
  \le c'_k K''L^k\,\M(T).
\]
Since $\d P' = 0$ and $P' = T' - \sum_{\om \in \Om} Z'_\om$, we have
\[
  \sum_{i=1}^F Z_i = \d\,\Gam_k(P') = \phi_\#(P') - \Lam_k(P')
  = f_\#T - \sum_{\om \in \Om} Z_\om - P,
\]
where $f := \phi \circ \psi$.
Finally, using Proposition~\ref{Pro:homotopy}, we get a decomposition
\[
  T - P = \sum_{i=1}^J R_i + \sum_{\om \in \Om} Z_\om +
  \sum_{i=1}^F Z_i,
\]
and the desired result follows upon relabeling the cycles. 

As for the last assertion of the theorem, a suitable $S \in \bI_{k+1,\cs}(X)$ with $\d S = T - P = \sum_{i=1}^N Z_i$ 
is obtained by applying the coning inequality (Proposition~\ref{Pro:coning}) 
to each $Z_i$.
\end{proof}  


\section{Proof of main result} \label{sect:main}

The proof of our main result relies crucially on Proposition~\ref{Pro:p-filling}, 
Theorem~\ref{Thm:approx}, and the following central element of 
Theorem~\ref{Thm:eucl-isop}; see~\cite[Proposition~3.1]{Wen-EII}.

\begin{Pro} \label{Pro:decomp}
Let $X$ be a $\CAT(0)$ space, and let $k \ge 1$.
There exist constants $\beta > 0$ and $\eps,\lam \in (0,1)$,
depending only on $k$, such that every cycle $T \in \bI_{k,\cs}(X)$ admits a
decomposition $T = T' + \sum_{i=1}^N R_i$ into finitely many cycles
$T',R_1,\ldots,R_N \in \bI_{k,\cs}(X)$ with
\begin{enumerate}
\item[\rm (1)] $\diam(R_i) \le \beta\,\M(R_i)^{1/k}$ for $i = 1,\ldots,N$,
\item[\rm (2)] $\M(T') \le (1-\eps)\,\M(T)$, and 
\item[\rm (3)] $\sum_{i=1}^N \M(R_i) \le (1 + \lam)\,\M(T)$.
\end{enumerate}
\end{Pro}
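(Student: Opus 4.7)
The plan is to adapt Wenger's argument for the Euclidean isoperimetric inequality~\cite{Wen-EII}: extract from $T$ finitely many round subcycles $R_i$ concentrated at points of high density, then verify that $T' := T - \sum_i R_i$ has strictly smaller mass while $\sum_i \M(R_i)$ exceeds $\M(T)$ only by a small multiplicative factor. Fix a density threshold $\alpha = \alpha_k > 0$ to be calibrated. Since $T$ is integer rectifiable, $\Theta^k(\|T\|, x) > 0$ at $\|T\|$-a.e.\ $x \in \spt(T)$, so $r \mapsto \|T\|(B(x,r))/r^k$ is arbitrarily large for small $r$ and tends to $0$ as $r \to \infty$. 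Setting $r(x) := \sup\{r : \|T\|(B(x,r)) \ge \alpha r^k\}$, continuity gives $\|T\|(B(x,r(x))) = \alpha r(x)^k$ and $\|T\|(B(x, cr(x))) \le \alpha(cr(x))^k$ for $c > 1$. A $5r$-Vitali selection—finite by compactness of $\spt(T)$ and the lower mass bound $\alpha r_i^k$ per selected ball—yields disjoint $B_i = B(x_i, r_i)$, $r_i = r(x_i)$, whose $5$-enlargements cover $\|T\|$-almost all of $\spt(T)$; hence $\sum_i \|T\|(B_i) \ge 5^{-k}\M(T)$.

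For each $i$, I would slice along $d(\cdot, x_i)$: the integral $(k-1)$-cycles $S_r := \d(T \on B(x_i, r))$ satisfy $\int_{r_i/2}^{r_i} \M(S_r)\,dr \le \alpha r_i^k$ by coarea, so pigeonhole yields $\sigma_i \in [r_i/2, r_i]$ with $\M(S_{\sigma_i}) \le 2\alpha r_i^{k-1}$. Theorem~\ref{Thm:eucl-isop} gives a filling $W_i \in \bI_{k,\cs}(X)$ of $S_{\sigma_i}$ with $\M(W_i) \le \gam_{k-1}(2\alpha)^{k/(k-1)} r_i^k$, chosen with support in a bounded neighborhood of $\spt(S_{\sigma_i})$. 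Setting $R_i := (T \on B(x_i, \sigma_i)) - W_i$, one obtains an integral $k$-cycle of diameter $O(r_i)$; a refined selection (taking $r_i$ as the smallest radius realizing equality in the density definition, so that $\|T\|(B(x_i, r)) \ge \alpha r^k$ throughout $r \le r_i$) forces $\|T\|(B(x_i, \sigma_i)) \ge 2^{-k}\alpha r_i^k$. Hence $\M(R_i) \ge 2^{-k}\alpha r_i^k - O\bigl(\alpha^{1 + 1/(k-1)}\bigr) r_i^k$, and for $\alpha = \alpha_k$ small enough the cap is a strict minority of the interior; this yields $\diam(R_i) \le \beta\,\M(R_i)^{1/k}$ with $\beta = \beta_k$, proving~(1).

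To verify (2) and (3), I sum: by disjointness $\sum_i \|T\|(B(x_i, \sigma_i)) \le \M(T)$, and the cap bound gives $\sum_i \M(W_i) \le \lam\M(T)$ with $\lam = \lam_k = O\bigl(\alpha_k^{1/(k-1)}\bigr) < 1$, so $\sum_i \M(R_i) \le (1 + \lam)\M(T)$, proving (3). For (2),
\begin{equation*}
\M(T') \le \M(T) - \sum_i \|T\|(B(x_i, \sigma_i)) + \sum_i \M(W_i) \le \bigl(1 - (2^{-k}\cdot 5^{-k} - \lam)\bigr)\M(T),
\end{equation*}
giving $\eps = \eps_k := 2^{-k}\cdot 5^{-k} - \lam_k > 0$ provided $\alpha_k$ is small enough.

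The main obstacle is the simultaneous calibration of $\alpha_k$ to satisfy (1)--(3) with constants depending only on $k$, and in particular the reconciliation of two competing radius conditions: the upper bound on $\|T\|(5B_i)$ (which favors a supremum definition of $r(x)$) versus the interior lower bound on $\|T\|(B(x_i, \sigma_i))$ (which favors an infimum definition). A workable resolution is to iterate the Vitali extraction in finitely many sweeps, each restricted to a scale range where a single definition suffices. The integrality assumption enters solely via the positive lower density bound that makes $r(x) > 0$, and the case $k = 1$ requires a separate treatment using direct segment fillings of $0$-cycles, where the exponent $k/(k-1)$ is replaced by a purely linear estimate.
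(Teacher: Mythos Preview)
The paper does not give its own proof of this proposition; it is quoted verbatim from~\cite[Proposition~3.1]{Wen-EII} and used as a black box. Your outline follows exactly Wenger's strategy (density thresholding, Vitali selection, slicing, and capping with the isoperimetric inequality in dimension $k-1$), so in spirit you are reconstructing the cited proof rather than offering an alternative.

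Your self-identified ``main obstacle'' is genuine but you are making it harder than necessary. The tension between the supremum definition (needed for the $5^{-k}$ covering estimate) and the infimum definition (needed for the interior lower bound $\|T\|(B(x_i,\sigma_i)) \gtrsim r_i^k$) disappears if you slice \emph{outward} rather than inward: keep $r(x) = \sup\{r:\|T\|(B(x,r)) \ge \alpha r^k\}$, so that $\|T\|(B(x,r(x))) = \alpha r(x)^k$ and $\|T\|(B(x,cr(x))) \le \alpha c^k r(x)^k$ for all $c \ge 1$, and choose $\sigma_i \in [r_i,2r_i]$ by coarea. Then $B(x_i,\sigma_i) \supset B(x_i,r_i)$ gives the interior bound for free, while the outer control at $2r_i,5r_i,10r_i$ still holds. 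To keep the excised pieces disjoint, run Vitali on the family $\{B(x,2r(x))\}$; the $5$-enlargements $B(x_i,10r_i)$ then cover $\|T\|$-almost everything, and $\|T\|(B(x_i,10r_i)) \le 10^k\alpha r_i^k = 10^k\|T\|(B(x_i,r_i))$ yields $\sum_i \alpha r_i^k \ge 10^{-k}\M(T)$. No iteration in ``finitely many sweeps'' is needed.

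Two further points you gloss over. First, ``chosen with support in a bounded neighborhood of $\spt(S_{\sigma_i})$'' is not automatic from Theorem~\ref{Thm:eucl-isop}; it requires the filling-radius estimate in~\cite{Wen-EII} (roughly: a cycle admitting a Euclidean isoperimetric filling admits one supported in the $C\,\M^{1/(k-1)}$-neighbourhood of its support), and this is essential for $\diam(R_i) \le \beta\,\M(R_i)^{1/k}$. Second, invoking Theorem~\ref{Thm:eucl-isop} to fill the $(k-1)$-slices is circular as stated, since the present proposition is a building block of that theorem; the correct phrasing is that the argument proceeds by induction on $k$, with the base case $k=1$ handled directly (as you note) by geodesic fillings of $0$-cycles.
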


We restate Theorem~\ref{Thm:main} as follows.

\begin{Thm}
Let $X$ be a $\CAT(0)$ space with asymptotic rank $\nu \in \{1,2\}$ and finite
asymptotic Nagata dimension. Then for every integer $k \ge \nu$ and every $\del \in (0,1/k)$ there 
exists a constant $C > 0$ such that every cycle $T \in \bI_{k,\cs}(X)$ has a
filling $V \in \bI_{k+1,\cs}(X)$ with
\[
  \M(V) \le C\,\M(T)^{1 + \del}.
\]
\end{Thm}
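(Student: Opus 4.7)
The strategy reduces, via Proposition~\ref{Pro:decomp}, to proving $\M(V) \le C\,\M(T)^{1+\del}$ only for \emph{round} cycles, i.e., those satisfying $\diam(T) \le \beta\,\M(T)^{1/k}$. The standard induction on mass used in the proof of Theorem~\ref{Thm:eucl-isop} achieves this: iteratively decompose the residual non-round part, whose mass contracts by a factor $(1-\eps)$ at each stage, and aggregate the round-case fillings using the subadditivity $\sum_i x_i^{1+\del} \le (\sum_i x_i)^{1+\del}$.

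Fix a round cycle $T$ of mass $m := \M(T)$. Let $n$ be the asymptotic Nagata dimension of $X$, and choose $r, c > 0$ such that for every $s \ge r$, $X$ admits a $cs$-bounded covering with $s$-multiplicity at most $n + 1$. With $B = B(k, n, c)$ the constant from Theorem~\ref{Thm:approx}, pick $\rho \in (0, 1/B)$ to be fixed later and set $s_0 := \beta\,m^{1/k}$, $s_j := \rho^j s_0$. Iteratively for $j = 1, \ldots, J$, apply Theorem~\ref{Thm:approx} at scale $s_j$ to each remainder cycle inherited from step $j-1$ (the input at step $1$ being $T$ itself). This extracts a piecewise minimizing cycle $P^{(j)}$ with $\mesh(P^{(j)}) \le B s_j$ and vertex set of diameter $\le B s_{j-1}$; the remainder cycles at step $j$ have diameters $\le B s_j$ and, by a telescoping argument, total mass $\le B^j m$.

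Each $P^{(j)}$ is filled via Proposition~\ref{Pro:p-filling}, contributing at step $j$ a total filling mass of at most $C' B^j s_{j-1} s_j^{\nu-1-k} m$. After step $J$, the remaining cycles are cone-filled via Proposition~\ref{Pro:coning}, adding at most $\tfrac{B s_J}{k+1} B^J m$. Substituting $s_j = \rho^j s_0$ and summing the resulting geometric series (treating $\nu = 2$ explicitly; the case $\nu = 1$ is analogous), the total filling mass is bounded by
\[
C'' m \bigl[ s_0^{2-k}(B\rho^{2-k})^J + s_0 (B\rho)^J \bigr].
\]
A direct calculation shows that setting $\rho := B^{-1/(k\del)}$ (which forces $B\rho < 1$ precisely because $\del < 1/k$) and $J := \lfloor c_\del \log m \rfloor$ for a suitable $c_\del$ makes both summands $\le C m^{1+\del}$ while keeping $s_J$ bounded below uniformly in $m$, so that the Nagata covering remains applicable throughout the iteration. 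For $m$ below a threshold, Theorem~\ref{Thm:eucl-isop} already provides $\M(V) \le \gam_k m^{1+1/k} \le \gam_k m^{1+\del}$ on that bounded range.

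The main obstacle is the tension between the multiplicative mass inflation by $B$ at every iteration---an unavoidable cost of passing a cycle through the approximation---and the only polynomial savings $s_j^{\nu-1-k}$ obtained from filling piecewise minimizing cycles at a finer scale. Shrinking $\rho$ absorbs the inflation but amplifies the per-step cost through the factor $\rho^{2-k}$; the hypothesis $\del < 1/k$ supplies precisely the margin in which these two forces can be balanced. A secondary subtlety is that the number $J$ of iterations must grow with $m$, yet the finest scale $s_J$ must stay above the threshold $r$ required by the Nagata covering, and the above choice of parameters is what guarantees that both constraints are compatible.
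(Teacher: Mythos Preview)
Your proposal is correct and follows the same overall architecture as the paper's proof: reduce to round cycles via Proposition~\ref{Pro:decomp}, iterate Theorem~\ref{Thm:approx} over a decreasing sequence of scales, fill the piecewise minimizing pieces by Proposition~\ref{Pro:p-filling}, and cone off the final residue. The one substantive difference is the choice of scale sequence. The paper takes $s_0 = \M(R)^{1/k} > s_1 > \dots > s_l = \M(R)^\del$ with $s_{n-1}/s_n \le \M(R)^\del$, so that the number of steps $l < 1/(k\del)$ is bounded \emph{independently of $\M(R)$}; the mass inflation is then at most $B^{1/(k\del)}$, a fixed constant, and no balancing is needed. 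Your geometric sequence with fixed ratio $\rho$ forces $J \sim \log m$ steps and hence $B^J \sim m^\del$; this is still absorbable, but only after the parameter tuning you describe, and the margin disappears exactly at $\del = 1/k$, as you note. The paper's version thus sidesteps the ``tension'' you identify in your final paragraph entirely.

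One small gap to patch: with your choices one gets $s_J \ge \beta$, but $\beta$ comes from Proposition~\ref{Pro:decomp} and need not exceed the Nagata threshold $r$, so Theorem~\ref{Thm:approx} may not apply at the final scales. The easy fix is to stop at the largest $J$ with $s_J > r$ (equivalently, terminate the iteration at scale $\sim r$ rather than $\sim \beta$); your estimates go through unchanged since $r$ is a constant, and the small-mass regime handled by Theorem~\ref{Thm:eucl-isop} already covers the cycles for which no such $J$ exists.
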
  

The assumption on the asymptotic rank will be used exclusively through 
Proposition~\ref{Pro:p-filling}. To illustrate the general principle, we will
write out the proof as if this result were to hold for arbitrary $\nu \ge 1$.

\begin{proof}
First, we consider a cycle $R \in \bI_{k,\cs}(X)$ with $\diam(R) \le \beta\,\M(R)^{1/k}$, where $\beta > 0$ is as above. 
By assumption, there exist $n,c,r > 0$ such that for $s > r$, every compact subset of $X$ has a finite $cs$-bounded covering with $s$-multiplicity at most $n+1$. 
If $\M(R) \le r^{1/\del}$, then
\[
\Fill(R) \le \gam_k\,\M(R)^{1+1/k} 
\le \gam_k\,r^{1/(k\del) - 1}\,\M(R)^{1 + \del}
\]
by Theorem~\ref{Thm:eucl-isop}. We now assume that $\M(R)^\del > r \ge 1$ and choose scales 
\[
\M(R)^{1/k} = s_0 > s_1 > \ldots > s_l = \M(R)^\del
\]
such that $s_{h-1}/s_h \le \M(R)^\del$ for $h = 1,\ldots,l$, where $l < 1/(k\del)$.
We show by induction that for $h = 1,\ldots,l$ there exist finite families $\cP_h,\cZ_h$ of $k$-cycles and decompositions
\[
R = \sum_{\cP_1} P + \ldots + \sum_{\cP_h} P + \sum_{\cZ_h} Z
\]
such that $\sum_{\cZ_h} \M(Z) \le B^h\,\M(R)$, every $P \in \cP_h$ is piecewise minimizing with $\mesh(P) \le B s_h$, and every $Z \in \cZ_h$ satisfies $\diam(Z) \le Bs_h$, where $B = B(k,n,c)$ is the constant from Theorem~\ref{Thm:approx}.
For $h=1$, this follows by applying the theorem with $s = s_1$ to $R$ (thus $\cP_1$ has a single member).
Let now $h \in \{2,\ldots,l\}$. 
Given the decomposition for $h-1$, assume that $\cZ_{h-1} = (Z_j)_j$, and apply Theorem~\ref{Thm:approx} with $s = s_h$ to write each $Z_j$ as a sum of a piecewise minimizing cycle $P_j$ with $\mesh(P_j) \le Bs_h$ and cycles $Z_{j,i}$ with $\diam(Z_{j,i}) \le Bs_h$.
Set $\cP_h := (P_j)_j$ and $\cZ_h := (Z_{j,i})_{j,i}$. 
This yields the decomposition for $h$, and 
\[
\sum_{\cZ_h}\M(Z) = \sum_{j,i} \M(Z_{j,i}) \leq \sum_j B\, \M(Z_j) \leq B^h\, \M(R),
\]
as desired. 

We now fill the individual cycles of the decomposition 
obtained after the last step. By construction, for $h \in \{1,\ldots,l\}$, 
every $P \in \cP_h$ stems from some $Z \in \cZ_{h-1}$, 
or from $Z = R$ in case $h = 1$, so that the vertices of $P$ lie in $\spt(Z)$ 
and $|P|_1 \le B s_h^{-k} \,\M(Z)$.
We assume for convenience that $B \ge \beta$.
By Proposition~\ref{Pro:p-filling},
\begin{align*}
\Fill(P) &\le C_{k+1} \diam(Z) \mesh(P)^{\nu - 1} \,|P|_1 \\
&\le C_{k+1} \cdot Bs_{h-1} \cdot (Bs_h)^{\nu-1} \cdot Bs_h^{-k}\, \M(Z),
\end{align*}
where $s_{h-1} s_h^{\,\nu-1-k} \le s_{h-1} s_h^{-1} \le \M(R)^\del$.
Since $\sum_{\cZ_{h-1}} \M(Z) \le B^{h-1}\,\M(R)$, or $\M(Z) = \M(R)$ if $h = 1$, we have
\[
\sum_{h=1}^l \sum_{\cP_h} \Fill(P) \le \sum_{h=1}^l C_{k+1} B^{\nu + h}\,\M(R)^{1+\del};
\]
recall that $l < 1/(k\del)$. 
Lastly, if $Z \in \cZ_l$, then $\diam(Z) \le B s_l = B\,\M(R)^\del$, thus Proposition~\ref{Pro:coning} gives 
\[
\sum_{\cZ_l} \Fill(Z) \le \frac{B\,\M(R)^{\del}}{k+1} \sum_{\cZ_l} \M(Z)
\le \frac{B^{l+1}}{k+1}\,\M(R)^{1 + \del}.
\]
This proves the desired $\del$-isoperimetric inequality with a constant $C'$ for all cycles $R \in \bI_{k,\cs}(X)$ 
with $\diam(R) \le \beta\,\M(R)^{1/k}$.

Let now $T \in \bI_{k,\cs}(X)$ be a general cycle and consider the decomposition of $T$ from Proposition~\ref{Pro:decomp}. 
Put $\mu := \eps/(1+\lam)$, so that~(2) and~(3) give
\[
\M(T') + \mu \sum_{i=1}^N \M(R_i) \le \M(T).
\]  
As we have shown, there exist
$V_1,\ldots,V_N \in \bI_{k+1,\cs}(X)$ such that $\d V_i = R_i$ and
\[
\M(V_i) \le C'\,\M(R_i)^{1+\del}.
\]
Then $V' := \sum_{i=1}^N V_i$ is a filling of $T - T'$, and
\begin{align*}
C'\,\M(T')^{1+\del} + \mu^{1+\del}\,\M(V')
&\le C' \biggl( \M(T')^{1+\del} + \sum_{i=1}^N \bigl( \mu\,\M(R_i) \bigr)^{1+\del} \biggr)\\ 
&\le C' \biggl( \M(T') + \mu \sum_{i=1}^N \M(R_i) \biggr)^{1+\del} \\
&\le C'\,\M(T)^{1+\del}.
\end{align*} 
Upon iterating this inequality a finite number of times, we can assume
$\M(T')$ to be small enough such that Theorem~\ref{Thm:eucl-isop} provides
a filling $S \in \bI_{k+1,\cs}(X)$ of $T'$ satisfying
\[
\mu^{1+\del}\,\M(S) 
\le \mu^{1+\del}\gam_k\,\M(T')^{1 + 1/k}
\le C'\,\M(T')^{1+\del}.
\]
Then $V := S + V'$ is a filling of $T$ with $\M(V) \le
C\,\M(T)^{1+\del}$ for $C := C'/\mu^{1+\del}$. 
\end{proof}

\subsection*{Acknowledgements}
The first two authors thank the Hausdorff Research Institute for Mathematics (HIM) in Bonn for hospitality during the trimester program {\em Metric Analysis} from January to April 2025.
The HIM is funded by Deutsche Forschungsgemeinschaft (DFG), project EXC 2047, no.~390685813.
We thank Cornelia Dru\c{t}u and Panos Papasoglu for useful discussions.




\begin{thebibliography}{XX}

\bibitem{AdaB} S. Adams, W. Ballmann,
Amenable isometry groups of Hadamard spaces,
Math. Ann. 312 (1998), no. 1, 183--195.

\bibitem{Alm-OI} F.~J. Almgren Jr., 
Optimal isoperimetric inequalities, 
Indiana Univ. Math. J. 35 (1986), no.~3, 451--547.

\bibitem{AmbK} L. Ambrosio, B. Kirchheim,
Currents in metric spaces,
Acta Math. 185 (2000), 1--80.

\bibitem{Arz+} G. Arzhantseva, G. A. Niblo, N. Wright, J. Zhang,
A characterization for asymptotic dimension growth,
Algebr. Geom. Topol. 18 (2018), no. 1, 493--524.

\bibitem{Ass} P. Assouad,
Sur la distance de Nagata,
C. R. Acad. Sci. Paris S\'er. I Math. 294 (1982), no. 1, 31-34.

\bibitem{Bal} W. Ballmann,
Lectures on Spaces of Nonpositive Curvature,
DMV Seminar 25, Birkh\"auser 1995.

\bibitem{BasWY} G. Basso, S. Wenger, R. Young,
Undistorted fillings in subsets of metric spaces,
Adv. Math. 423 (2023), Paper No. 109024, 54 pp.

\bibitem{BelD} G. Bell, A. Dranishnikov,
Asymptotic dimension,
Topology Appl. 155 (2008), no. 12, 1265--1296.

\bibitem{BriH} M. R. Bridson, A. Haefliger,
Metric Spaces of Non-Positive Curvature,
Springer 1999.

\bibitem{Bro+} N. Brodskiy, J. Dydak, M. Levin, A. Mitra,
A Hurewicz theorem for the Assouad--Nagata dimension,
J. Lond. Math. Soc. (2) 77 (2008), no. 3, 741--756.

\bibitem{Buy} S. V. Buyalo,
Capacity dimension and embedding of hyperbolic spaces into the product of trees
(Russian),
Algebra i Analiz 17 (2005), no. 4, 42--58; translation in
St. Petersburg Math. J. 17 (2006), no. 4, 581--591.

\bibitem{BuyS} S. Buyalo, V. Schroeder,
Elements of Asymptotic Geometry,
EMS Monogr. Math., Europ. Math. Soc. 2007

\bibitem{DydH} J. Dydak, J. Higes,
Asymptotic cones and Assouad--Nagata dimension,
Proc. Amer. Math. Soc. 136 (2008), no. 6, 2225--2233.

\bibitem{DraS} A. N. Dranishnikov, J. Smith,
On asymptotic Assouad--Nagata dimension,
Topology Appl. 154 (2007), no. 4, 934--952.

\bibitem{DLPS} C. Dru\c{t}u, U. Lang, P. Papasoglu, S. Stadler,
Minimal tetrahedra and an isoperimetric gap theorem in non-positive curvature,  
arXiv:2502.03389 [math.MG].

\bibitem{Fed} H. Federer,
Geometric Measure Theory,
Springer 1969.

\bibitem{FujP} K. Fujiwara, P. Papasoglu,
Asymptotic dimension of planes and planar graphs,
Trans. Amer. Math. Soc. 374 (2021), no. 12, 8887--8901.

\bibitem{Gol} T. Goldhirsch,
Lipschitz chain approximation of metric integral currents,
Anal. Geom. Metr. Spaces 10 (2022), 40--49.

\bibitem{Gol-Diss} T. Goldhirsch,
Higher Rank Hyperbolicity and Currents in Metric Spaces,
Diss. ETH Zurich No. 29846 (2023).

\bibitem{GolL} T. Goldhirsch, U. Lang,
Characterizations of higher rank hyperbolicity,
Math. Z. 305 (2023), Paper No. 13, 26 pp.

\bibitem{Gro-FRR} M. Gromov,
Filling Riemannian manifolds,
J. Differential Geom. 18 (1983), 1--147.    
  
\bibitem{Gro-HG} M. Gromov,
Hyperbolic groups, in: S. Gersten (ed.), Essays in Group Theory,
Math. Sci. Res. Inst. Publ. 8, Springer 1987, pp. 75--263. 

\bibitem{Gro-AI} M. Gromov,
Asymptotic invariants of infinite groups, in: A. Niblo, M. A. Roller (eds.),
Geometric Group Theory, London Math. Soc. Lect. Notes Ser. 182,
Cambridge Univ. Press 1993, pp. 1--295. 

\bibitem{Gro-MS} M. Gromov,
Metric Structures for Riemannian and Non-Riemannian Spaces,
Birkh\"auser 1999.

\bibitem{Isl} H. Isleifsson,
Linear isoperimetric inequality for homogeneous Hadamard manifolds,
J. Topol. Anal. 17 (2025), no. 3, 761--768.

\bibitem{JorL} M. J{\o}rgensen, U. Lang,
Geodesic spaces of low Nagata dimension,
Ann. Fenn. Math. 47 (2022), 83--88.

\bibitem{Kle} B. Kleiner,
The local structure of length spaces with curvature bounded above,
Math. Z. 231 (1999), 409--456.  

\bibitem{Kle-I} B. Kleiner, 
An isoperimetric comparison theorem, 
Invent. Math. 108 (1992), no.~1, 37--47.
  
\bibitem{KleL} B. Kleiner, U. Lang,
Higher rank hyperbolicity,
Invent. Math. 221 (2020), no.~2, 597--664.

\bibitem{Lan-LII} U. Lang,
Higher-dimensional linear isoperimetric inequalities in hyperbolic groups,
Internat. Math. Res. Notices 2000, no. 13, 709--717.

\bibitem{Lan-LC} U. Lang, 
Local currents in metric spaces,
J. Geom. Anal. 21 (2011), 683--742.

\bibitem{LanP} U. Lang, C. Plaut,
Bilipschitz embeddings of metric spaces into space forms,
Geom. Dedicata 87 (2001), no. 1--3, 285--307.

\bibitem{LanS-ND} U. Lang, T. Schlichenmaier,
Nagata dimension, quasisymmetric embeddings, and Lipschitz extensions,
Internat. Math. Res. Notices 2005, no. 58, 3625--3655.  

\bibitem{LanS-KT} U. Lang, V. Schroeder,
Kirszbraun's theorem and metric spaces of bounded curvature,
Geom. Funct. Anal. 7 (1997), 535--560.

\bibitem{Leb} N. Lebedeva,
Dimensions of products of hyperbolic spaces (Russian),
Algebra i Analiz 19 (2007), no. 1, 149--176; translation in
St. Petersburg Math. J. 19 (2008), no. 1, 107--124.

\bibitem{Leu} E. Leuzinger,
Optimal higher-dimensional Dehn functions for some CAT(0) lattices,
Groups Geom. Dyn. 8 (2014), 441--466.

\bibitem{MeiW} D. Meier, S. Wenger,
Quasiconformal almost parametrizations of metric surfaces,
J. Eur. Math. Soc. 27 (2025), no. 12, 5133--5154.

\bibitem{Roe} J. Roe,
Lectures on Coarse Geometry,
Univ. Lecture Ser. 31,
Amer. Math. Soc. 2003.

\bibitem{Sch} F. Schulze, 
Optimal isoperimetric inequalities for surfaces in any codimension in Cartan-Hadamard manifolds, Geom. Funct. Anal. 30 (2020), no.~1, 255--288.

\bibitem{Sta} S. Stadler,
The structure of minimal surfaces in $\CAT(0)$ spaces,
J. Eur. Math. Soc. 23 (2021), no. 11, 3521--3554.  
 
\bibitem{Wen-EII} S. Wenger,
Isoperimetric inequalities of Euclidean type in metric spaces,
Geom. Funct. Anal. 15 (2005), 534--554.

\bibitem{Wen-FI} S. Wenger,
Filling invariants at infinity and the Euclidean rank of Hadamard spaces,
Internat. Math. Res. Notices 2006, Art. ID 83090, 33 pp.

\bibitem{Wen-IC} S. Wenger,
Gromov hyperbolic spaces and the sharp isoperimetric constant,
Invent. Math. 171 (2008), 227--255.

\bibitem{Wen-AR} S. Wenger,
The asymptotic rank of metric spaces,
Comment. Math. Helv. 86 (2011), 247--275.  

\bibitem{Wri} N. Wright,
Finite asymptotic dimension for CAT(0) cube complexes,
Geom. Topol. 16 (2012), no. 1, 527--554.

\bibitem{Xie} X. Xie,
Nagata dimension and quasi-M\"obius maps,
Conform. Geom. Dyn. 12 (2008), 1--9.

\bibitem{Zue} R. Z\"ust,
Functions of bounded fractional variation and fractal currents,
Geom. Funct. Anal. 29 (2019), no. 4, 1235--1294.

\end{thebibliography}
\end{document}